\def\obrazek#1#2{\pgfdeclareimage[height=#2]{#1}{#1}
\begin{center}\pgfuseimage{#1}\end{center}}
\newtheorem{theorem}{Theorem}
\newtheorem{proposition}[theorem]{Proposition}
\newtheorem{lemma}[theorem]{Lemma}
\newtheorem{corollary}[theorem]{Corollary}
\newtheorem{remark}[theorem]{Remark}
\newtheorem{definition}[theorem]{Definition}
\newtheorem{example}[theorem]{Example}
\def\P{\mathbb{P}}
\def\C{\mathbb{C}}
\def\Q{\mathbb{Q}}
\def\R{\mathbb{R}}
\def\Z{\mathbb{Z}}
\def\qed{\hfill$\Box$}
\def\cat{{\rm cat}}
\def\sup{{\rm sup}}
\def\inf{{\rm inf}}
\def\eps{\varepsilon}
\def\GS{\mathcal{GS}}
\title[On the Pe{\l}czy\'{n}ski conjecture on Auerbach bases]
{On the Pe{\l}czy\'{n}ski conjecture on Auerbach bases}
\author{Andrzej Weber and Micha\l ~Wojciechowski}
\thanks{The first author is supported by NCN grant 2013/08/A/ST1/00804}
\thanks{\it{Mathematics Subject Classifications:} \rm{46B04, 46B15, 52A20, 52A21, 55M30}}
\thanks{\it{Keywords and phrases:} \rm{Auerbach basis, Lusternik--Schnirelmann category}}
\address{Andrzej Weber, {\rm Department of Mathematics of Warsaw University\\Banacha 2, 02-097 Warszawa, Poland\\and Institute of Mathematics\\ Polish Academy of Sciences\\ ul. \'Sniadeckich 8, 00-656 Warszawa, Poland}}
\email{aweber@mimuw.edu.pl}
\address{Micha?? Wojciechowski, {\rm Institute of Mathematics\\ Polish Academy of Sciences\\ ul. \'Sniadeckich 8, 00-656 Warszawa, Poland}}
\email{miwoj@impan.pl}
\begin{document}
\begin{abstract}
We consider Auerbach bases in Banach spaces of dimension $n>2$. We show that there exist at least $(n-1)n/2+1$ such bases. This estimate follows from the calculation of the Lusternik--Schnirelmann category of the flag variety. A better estimate is obtained for generic smooth Banach spaces using Morse theory.
\end{abstract}
\maketitle
\section{Introduction}
Let $X$ be an $n$-dimensional Banach (real or complex) space. We call a biorthogonal system
of vectors $x_1,x_2,\dots,x_n\in X$ and functionals $x_1^*, x_2^*,\dots,x_n^*\in X^*$
an Auerbach basis provided $\|x_i\|=\|x_i^*\|=1$ for $i=1,2,\dots,n$. The existence
of Auerbach basis was established first by Auerbach in his PhD thesis
(cf. \cite{A}, \cite[Remarks to Chpt. VII]{Ban});
proof was first published independently by Day (cf. \cite{Day}) and Taylor (cf. \cite{Tay}).
It is based on the extremal argument - the basis vectors are selected by maximizing the
volume of the convex symmetric envelope of $n$-tuples of vectors from the unit sphere.
Plichko (cf. \cite{Pli}) noticed that if $X$ is not a Hilbert space the maximal argument applied to the dual space gives another Auerbach basis, which means that in a finite dimensional Banach space there always exist two different Auerbach bases  (here we identify bases which differ only by
permutation or multiplication by scalars of absolute value one). Motivated by this result  Pe{\l}czy\'{n}ski stated the conjecture that any $n$-dimensional Banach space admits at least $n$ different Auerbach bases. In this paper we confirm Pe{\l}czy\'{n}ski's conjecture by proving the following

\begin{theorem}\label{auerbach}
1) In every $n$-dimensional Banach space there exist at least $\frac12n(n-1)+1$ different
Auerbach bases

2) For an open (in the sense of Hausdorff distance) dense set of Banach spaces  with smooth (continually twice differentiable) norm, there
exist at least $2^{[n/2]}+4$ (real case) or $n!$ (complex case) different Auerbach bases.
\end{theorem}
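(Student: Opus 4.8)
The plan is to realize the Auerbach bases of $X$ as the critical points of one function on a compact manifold whose homotopy type does not depend on $X$, and then to feed this into Lusternik--Schnirelmann theory for part 1) and Morse theory for part 2). The variational set-up I would use: let $\mathcal B$ be the manifold of linearly independent $n$-tuples $(b_1,\dots,b_n)$ of unit vectors of $X$, and on it consider $\Psi(b_1,\dots,b_n)=-\log|\det(b_1,\dots,b_n)|$, the determinant taken with respect to a fixed identification $X\cong\mathbb K^n$. Then $\Psi$ is bounded below, proper (it tends to $+\infty$ as the $n$-tuple degenerates), and invariant under permutation of the $b_i$ and under rescaling each $b_i$ by a scalar of modulus one; hence it descends to a proper, bounded-below function, still denoted $\Psi$, on the quotient $Y$ of $\mathcal B$ by $S_n\ltimes(\text{unit scalars})^n$. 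For a $C^1$ norm, a short computation using $d\|{\cdot}\|_v=f_v$, the norming functional at $v$, shows that $(b_i)$ is a critical point exactly when $f_{b_i}$ equals the $i$-th coordinate functional $b_i^*$ for every $i$, i.e.\ exactly when $(b_i,b_i^*)$ is an Auerbach basis; so the number of Auerbach bases equals the number of critical points of $\Psi$ on $Y$ (for a general norm one passes to the subdifferential, or approximates by smooth norms in the Hausdorff metric). The structural point is that $\mathcal B$ deformation retracts onto $O(n)$, respectively $U(n)$, so that $Y$ is homotopy equivalent to the real, respectively complex, full flag manifold divided by the free permutation action of $S_n$; its category and its (co)homology are therefore the same for every $X$.

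For part 1) I would simply invoke that every function on $Y$ has at least $\cat(Y)$ critical points, and compute $\cat(Y)=\tfrac12n(n-1)+1$. Since $\dim Y=\binom n2$ one has $\cat(Y)\le\binom n2+1$; for the matching lower bound one exhibits a nonzero cup product of $\binom n2$ positive-degree classes with $\mathbb Z/2$ coefficients, extracted from the Borel presentation of the mod-$2$ cohomology of the flag manifold as the coinvariant algebra (with fundamental class in degree $\binom n2$), checking that a top monomial survives the passage to the $S_n$-quotient. This gives $\tfrac12n(n-1)+1$ Auerbach bases.

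For part 2) I would restrict to $C^2$ norms and use Morse theory on $Y$. The first task is to show that for an open dense set of such norms $\Psi$ is a Morse function: writing the critical equations $f_{b_i}=b_i^*$ as the vanishing of a $C^1$ section of a vector bundle over $Y$, one runs a Thom--Sard transversality argument in the space of $2$-jets of norms — the admissible perturbations, namely those keeping the unit ball convex, already suffice, since a small perturbation of the unit sphere supported near the finitely many critical unit vectors moves the section transversally — and then non-degeneracy and the number of critical points are $C^2$-stable, which yields openness (the Hausdorff and $C^2$ topologies being comparable on the relevant compact sets). Once $\Psi$ is Morse, the number of critical points is at least $\sum_i\dim H_i(Y;\mathbb F)$ for every field $\mathbb F$, and at least $\sum_i\dim H_i(Y;\mathbb Q)+2\sum_i(\text{rank of the torsion of }H_i(Y;\mathbb Z))$. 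It then remains to identify enough of the homology of $Y$: the complex flag manifold has torsion-free homology concentrated in even degrees with total Betti number $n!$, which gives the bound $n!$; the real flag manifold (and $Y$) has total rational Betti number $2^{[n/2]}$, while the $2$-torsion carried by the integral homology, together with the way it interacts with the $S_n$-action, accounts for the extra additive term, producing $2^{[n/2]}+4$.

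The step I expect to be genuinely hard is the genericity statement in part 2): one is perturbing inside the non-linear closed cone of convex $C^2$ norms rather than in a linear space of functions, one must make all critical points of $\Psi$ on $Y$ non-degenerate simultaneously, and one must control the comparison between the Hausdorff distance on norms and the $C^2$ topology so that the exceptional set is closed and nowhere dense for the metric stated. A secondary but non-formal point is the exact determination of the constants $2^{[n/2]}+4$ and $n!$, which requires a careful computation of the integral and mod-$p$ homology of the flag-variety quotient $Y$ and of the structure of the Morse complex as a module over the symmetric group.
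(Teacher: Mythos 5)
Your overall strategy coincides with the paper's: minimize $-\log|\det|$ on the quotient of the space of unit-vector $n$-tuples, identify critical points with Auerbach bases, and use Lusternik--Schnirelmann category for 1) and Morse inequalities for 2), with the flag variety supplying the topology. But three steps you gloss over are exactly where the real work lies, and as written they are gaps.

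First, for part 1) the norm is arbitrary, so $\Psi$ is merely continuous and ``critical point'' is undefined; your fallback of ``pass to the subdifferential, or approximate by smooth norms in the Hausdorff metric'' does not close this. Approximation is genuinely lossy: distinct Auerbach bases of the approximants can coalesce in the limit, so you do not get $\binom n2+1$ \emph{different} bases of the original space this way. The paper's solution is to replace critical points by topological \emph{bifurcation points} (points near which $f$ is not a trivial product projection), to prove that every bifurcation point of the volume function yields an Auerbach basis by an explicit local product structure at non-Auerbach tuples, and to prove from scratch (in the Appendix) a Lusternik--Schnirelmann theorem for continuous proper functions on locally contractible metric spaces. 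Some substitute for this is indispensable.

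Second, your lower bound $\cat(Y)\geq\binom n2+1$ via a ``top monomial surviving the passage to the $S_n$-quotient'' is shaky: the $\Sigma_n$-action is free and of even order, so $H^*(Y;\Z_2)$ is \emph{not} the invariant part of $H^*(Fl(n,\R);\Z_2)$, and it is not clear the top class of the flag variety pulls back from $Y$. The paper avoids this entirely by computing the cup-length upstairs on $Fl(n,\R)$ and then invoking Fox's theorem that for a covering $M_1\to M_2$ one has $\cat(M_2)\geq\cat(M_1)$. Third, your derivation of the additive constant $4$ from integral torsion is only a sketch and would require computing the torsion of the quotient; the paper gets it from the fundamental group: $\pi_1(SO(n)/G_0)$ is non-abelian, hence not cyclic, forcing at least two index-one critical points, and $H_1(SO(n);\Q)=0$ forces at least two index-two points to kill them. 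Your transversality route to genericity is a legitimate alternative to the paper's (which uses Hammer's density of semialgebraic bodies plus a codimension count in spaces of homogeneous polynomials), and you correctly flag it as the delicate point, but it too is only a plan, not a proof.
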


The counterpart of Auerbach basis for non-symmetric convex bodies is the notion of conjugate affine diameters (cf. survey \cite{Soltan} for definitions). It was observed by Sobczyk (cf. \cite{Sobczyk}) that if $K$ is an $n$-dimensional convex body then any Auerbach basis of a Banach space with the unit ball $K-K$ corresponds to the system of conjugate affine diameters of $K$. Therefore we immediately get a corollary

\begin{corollary}
Any $n$-dimensional convex body has at last $\frac12n(n-1)+1$ different systems of conjugate affine diameters.
\end{corollary}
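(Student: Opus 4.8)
The plan is to read off the Corollary from part~1) of Theorem~\ref{auerbach} together with Sobczyk's dictionary between Auerbach bases and conjugate affine diameters. First I would fix an $n$-dimensional convex body $K$ (compact, convex, with nonempty interior) in $\R^n$. Its difference body $K-K=\{x-y:x,y\in K\}$ is again compact and convex, is centrally symmetric about the origin, and has nonempty interior since it contains a translate of $K$; hence $K-K$ is the closed unit ball of a norm, and I let $X$ denote the resulting $n$-dimensional Banach space. By part~1) of Theorem~\ref{auerbach}, $X$ admits at least $\frac12 n(n-1)+1$ distinct Auerbach bases, where, as fixed in the Introduction, two such bases are regarded as the same if they differ only by a permutation of the indices and by rescaling each $x_i$ (and the matching $x_i^*$) by a scalar of modulus one.

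Next I would invoke Sobczyk's observation (\cite{Sobczyk}; see \cite{Soltan} for the definition of a system of conjugate affine diameters): for the Banach space whose unit ball is $K-K$, each Auerbach basis corresponds to a system of $n$ conjugate affine diameters of $K$, and conversely. Concretely, a unit vector $x_i\in X$ points along a chord of $K-K$ of maximal length in its direction, which is the direction of an affine diameter of $K$; the norming functional $x_i^*$ encodes the pair of parallel supporting hyperplanes realizing conjugacy, and the biorthogonality relations $x_i^*(x_j)=\delta_{ij}$ are exactly the conjugacy relations among the $n$ diameters. The identifications made in counting Auerbach bases --- permuting the indices, and replacing $x_i$ by $\lambda_i x_i$ with $|\lambda_i|=1$ --- correspond on the geometric side to reordering the diameters and to the harmless reorientation of each diameter, so none of them changes the underlying system. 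Thus the correspondence descends to an injection from equivalence classes of Auerbach bases of $X$ into systems of conjugate affine diameters of $K$.

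Putting the two steps together, $K$ has at least as many systems of conjugate affine diameters as $X$ has Auerbach bases, i.e.\ at least $\frac12 n(n-1)+1$; the trivial cases $n\le 2$ can be checked by hand. The only place demanding care --- the ``hard part'', such as it is in a corollary of this kind --- is the bookkeeping in Sobczyk's correspondence: one has to be sure that the equivalence relation used to count Auerbach bases in Theorem~\ref{auerbach} matches the natural notion of equality of systems of conjugate affine diameters, so that distinct bases really do produce distinct systems and the lower bound transfers with no loss. Granting this, which is precisely the content of \cite{Sobczyk}, the Corollary follows at once.
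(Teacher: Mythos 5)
Your proposal is correct and follows exactly the paper's route: apply part~1) of Theorem~\ref{auerbach} to the Banach space whose unit ball is the difference body $K-K$, and transfer the count via Sobczyk's correspondence between Auerbach bases of that space and systems of conjugate affine diameters of $K$. The paper treats this as immediate, citing \cite{Sobczyk}; your extra care about matching the equivalence relations on both sides is a reasonable elaboration of the same argument.
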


Our method of proof consists of the study of critical values of determinant function.
In Section \ref{section2} we define the function and establish a connection of its critical points with
Auerbach bases. In Section \ref{section3} we study  topological properties of the domain of the determinant function. In Section \ref{section4} we calculate the topological invariants which enable us to prove the second part of Theorem 1 via the Morse theory. In Section \ref{section5} we deal with the general case. We use the Lusternik--Schnirelmann theory to estimate the number of bifurcation points of the determinant function. Finally in Section \ref{section6} we show that the set of norms for which the Morse theory applies is open and dense in a suitable topology. Therefore  the estimation of the number of Auerbach bases given by the second part of Theorem \ref{auerbach} holds for a generic norm.  In Section 7 we apply the methods developed in the previous sections to estimate  the number of so called Auerbach simplices which could be treated as another analog of Auerbach bases for non-symmetric bodies.
\medskip

Our main estimate is based on the Lusternik--Schnirelmann theory which relates the category of a manifold with a number of critical points of a given function. In the  presentations which are available in the literature a function is assumed to be smooth. In the Appendix we give a proof with merely topological assumptions.
\medskip

We thank Zbigniew Szafraniec for suggesting application of Lusternik--Schnirelmann theory.

\section{Unit vector bases}\label{section2}
Let us fix a natural number $n>1$. Denote by $W(n)$ the set of square real matrices of size $n\times n$ the columns of which are unit vectors. Of course $W(n)$ is a product of spheres:
$$W(n)\simeq (S^{n-1})^n.$$
Let us consider the determinant of a square matrix as a function defined on $W(n)$. The value $0$ is a critical value.
Let us consider the open manifolds
$$W(n)_{>0}=\{B\in W(n)\,:\,\det(B)>0\}\,,$$
$$W(n)_{<0}=\{B\in W(n)\,:\,\det(B)<0\}\,.$$
These manifolds are homeomorphic, and so let us focus on $W(n)_{>0}$.

\begin{proposition}\label{Wzero} The manifold $W(n)_{>0}$ is diffeomorphic to $$SO(n)\times \mathring{\mathbb B}^{\frac{n(n-1)}2}\,,$$
where $\mathring{\mathbb B}^{\frac{n(n-1)}2}$ is an open ball of the dimension $\frac{n(n-1)}2$.
\end{proposition}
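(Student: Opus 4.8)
The plan is to use the QR-type (Gram–Schmidt) decomposition of matrices with positive determinant. Recall that any invertible real matrix $B$ factors uniquely as $B = QR$ with $Q \in O(n)$ orthogonal and $R$ upper triangular with positive diagonal entries; when $\det(B) > 0$ we have $\det(Q) = 1$, so $Q \in SO(n)$. The map $B \mapsto (Q,R)$ is a diffeomorphism from $GL(n)^+$ onto $SO(n) \times T_n^+$, where $T_n^+$ is the group of upper triangular matrices with positive diagonal. The task is to identify the image of $W(n)_{>0}$ under this diffeomorphism and show that the $R$-part sweeps out something diffeomorphic to an open ball of dimension $\frac{n(n-1)}{2}$.

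First I would observe what the unit-column condition on $B = QR$ translates to. Since $Q$ is orthogonal, $\|Be_j\| = \|Re_j\|$, so the columns of $B$ are unit vectors if and only if the columns of $R$ are unit vectors. Writing $R = (r_{ij})$ with $r_{ij} = 0$ for $i > j$ and $r_{jj} > 0$, the $j$-th column condition reads $\sum_{i=1}^{j} r_{ij}^2 = 1$. Thus the set of admissible $R$ is a product over $j = 1,\dots,n$ of the sets $\{(r_{1j},\dots,r_{jj}) : r_{jj} > 0,\ \sum_{i \le j} r_{ij}^2 = 1\}$. For $j = 1$ this forces $r_{11} = 1$, a single point. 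For $j \ge 2$ it is an open hemisphere of $S^{j-1}$ (the points of the unit sphere in $\mathbb{R}^j$ with positive last coordinate), which is diffeomorphic to the open ball $\mathring{\mathbb{B}}^{j-1}$. Hence the admissible $R$'s form a manifold diffeomorphic to $\prod_{j=2}^{n} \mathring{\mathbb{B}}^{j-1} \cong \mathring{\mathbb{B}}^{1 + 2 + \cdots + (n-1)} = \mathring{\mathbb{B}}^{n(n-1)/2}$, using that a product of open balls is an open ball of the summed dimension.

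Putting these together: the QR diffeomorphism restricts to a diffeomorphism $W(n)_{>0} \xrightarrow{\ \sim\ } SO(n) \times \mathcal{R}$, where $\mathcal{R}$ is the manifold of admissible upper-triangular factors, and $\mathcal{R} \cong \mathring{\mathbb{B}}^{n(n-1)/2}$ by the preceding paragraph. One should check that this map is a product diffeomorphism in the needed sense, i.e.\ that $W(n)_{>0}$ is genuinely the preimage $\{B : Q(B) \in SO(n),\ R(B) \in \mathcal{R}\}$ and that $SO(n) \times \mathcal{R}$ is an open subset of $SO(n) \times T_n^+$ on which the inverse map $(Q,R)\mapsto QR$ is smooth — both are immediate since $\mathcal{R}$ is open in $T_n^+$ and matrix multiplication is smooth. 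The main point requiring care is the smoothness (not just continuity) of the QR decomposition $B \mapsto (Q,R)$ on $GL(n)^+$; this follows from the Gram–Schmidt formulas, which express $Q$ and $R$ as rational functions of the entries of $B$ with nonvanishing denominators (the successive leading principal minors, or equivalently the norms of the partial Gram–Schmidt vectors, which are positive on $GL(n)^+$). I expect this verification of smoothness and of the precise identification of the hemisphere factors with open balls to be the only genuinely technical step; everything else is the standard structure theory of $GL(n)^+$.
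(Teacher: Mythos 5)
Your proposal is correct and is essentially the paper's own argument: the QR factorization you invoke is exactly the Gram--Schmidt map $\GS$ used in the paper, and your set $\mathcal{R}$ of admissible upper-triangular factors is precisely the fiber $\GS^{-1}(I)$, which the paper likewise identifies as a product of open hemispheres/balls of dimensions $1,\dots,n-1$. The only cosmetic difference is that you obtain the product structure from the global QR diffeomorphism of $GL(n)^+$, whereas the paper phrases it via $SO(n)$-equivariance of $\GS$ under left multiplication.
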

\begin{proof} The group $SO(n)$ acts on $W(n)_{>0}$ by multiplication from the left. Consider the map  $$\GS :W(n)_{>0} \to SO(n)$$ given by the Gram-Schmidt orthogonalization
$$\GS (v_1,v_2,\dots,v_n)= \left(v_1,\frac{v_2-\langle v_1,v_2\rangle v_1}{|v_2-\langle v_1,v_2\rangle v_1|},\dots\right)\,.$$
This map is well defined since $W(n)_{>0}$ consists of non-degenerate matrices. The map $\GS$ is invariant with respect to the left action of $SO(n)$. This means, that for $A\in SO(n)$ and $B=(v_1,v_2,\dots,v_n)\in W(n)_{>0}$ we have
$$\GS (A\cdot B)=A\cdot \GS (B)\,.$$
It follows that $$W(n)_{>0}\simeq SO(n)\times \GS ^{-1}(I)\,,$$
where $I$ is the identity matrix.
We claim that $\GS^{-1}(I)$ is homeomorphic to the open ball $\mathring{\mathbb B}^{\frac{n(n-1)}2}$. To see this
observe, that $\GS^{-1}(I)$  consists of matrices of the shape
$$\left(\begin{matrix}
1&a_{12}&a_{13}&\dots&a_{1n}\\
0&\sqrt{1-a_{12}^2}&a_{23}&\dots&a_{2n}\\
0&0&\sqrt{1-a_{13}^2-a_{23}^2}&\dots&a_{3n}\\
\vdots\\
0&0&0&\dots&\sqrt{1-\sum_{i=1}^{n-1}a_{in}^2}\end{matrix}\right)\,.$$
Hence $\GS^{-1}(I)$ can be identified with the subset of $\R^{\frac{n(n-1)}2}$ with coordinates $a_{ij}$ for $1\leq i<j\leq n$  on which the square roots is defined. That is we demand that
\begin{equation}1-\sum_{i=1}^{j-1}a_{ij}^2>0\label{nierownosci}\end{equation}
for $j=2,3,\dots,n$. This set  is the product of the unit open balls  $\mathring{\mathbb B}_j\subset\R^{j-1}$ (where $\R^{j-1}$ has coordinates $a_{ij}$, $1\leq i<j$).
 The product of open balls is diffeomorphic to an open ball.\end{proof}

Let $$G=\Z_2^n\rtimes \Sigma_n$$ be the semidirect product of the permutation group $\Sigma_n$ and the group $\Z_2^n$.
This group is generated by matrices of permutations and the diagonal matrices with $\pm1$ on the diagonal. Every element of $G$ can be uniquely written as $a\sigma$, where $a$ is a diagonal matrix and $\sigma$ is a permutation. The group $G$ acts on $W(n)$ by multiplication from  the right, when we view the elements of $W(n)$ as matrices or by  permutation and change of sign, when we view the elements of $W(n)$ as $n$-tuples of unit vectors. The group $G_0\subset G$ consisting of matrices of determinant equal to 1 preserves $W(n)_{>0}$. The group $G_0$ consists of elements $a\sigma\in G$ for which $\det(a)=\det(\sigma)$. The group $G_0$ acts freely on $W(n)_{>0}$ since $G_0$ and $W(n)_{>0}$ are subgroups of the linear group $GL_n(\R)$ and the action is given by multiplication of matrices. Therefore the quotient $W(n)_{>0}/G_0$ is a manifold.

\begin{example}\label{przyklad}\rm Let $n=2$. Then $W(n)_{>0}$ is homeomorphic to $SO(2)\times (0,\pi)$. The pair $(R_\phi,\theta)$ is sent to the pair of vectors $$\big(\cos(\phi),\sin(\phi))\,,\;(\cos(\phi+\theta),\sin(\phi+\theta)\big)\,.$$
Here $R_\phi$ is the rotation by the angle $\phi\in[0,2\pi)$.
The group $G_0$ is generated by the elements:
\begin{align*}\alpha:(R_\phi,\theta)&\mapsto (R_{\phi+\pi},\theta)\\
\beta:(R_\phi,\theta)&\mapsto (R_{\phi+\theta},\pi-\theta)\end{align*}
The quotient  $W(n)_{>0}/G_0$ is homeomorphic to the open M\"obius strip.
\end{example}

A finite dimensional Banach space is identified with fixed Euclidean space $\R^n$ together
with its closed unit ball - a convex centrally symmetric body $D\subset\R^n$ with nonempty interior. Given such  body
we will construct a  function $g:W(n)/G_0\to \R$ whose critical or strictly speaking bifurcation points will be correspond to Auerbach bases of the underlying Banach space.

\begin{definition}\label{bifur} We say that $x\in M$ is a topologically regular point of $g$ if there exists a neighbourhood of $x$ which is of a product form $U\simeq S\times (a-\eps,a+\eps)$ and the function $g$ on $U$ coincides with the projection onto the second factor. If $x$ is not topologically regular, then we say that it is a bifurcation point. The corresponding value $f(x)$ is called a bifurcation value.\end{definition}

If $g$ is a smooth function, then a bifurcation point is a critical point in the differential sense. There are critical points which are topologically regular (for example the function $g(x)=x^3$). By the Morse lemma the critical points with nondegenerate Hessian (i.e. Morse singularities) are bifurcation points.

\bigskip

 Let  $\partial D$ be the boundary of $D$. Of course $\partial D$ is homeomorphic to $S^{n-1}$.
For a vector $v\in S^{n-1}$ let $h(v)$ be the length of a vector which is proportional to $v$ and belongs to $\partial D$.
Let us define
\begin{equation}\label{gtilde}\tilde g=\tilde g_D:W(n)=(S^{n-1})^n\to \R\end{equation}
$$\tilde g(v_1,v_2,\dots,v_n):=\det(v_1,v_2,\dots,v_n)\prod_{i=1}^nh(v_i)\,.$$
This function is equal to the volume of the parallelepiped spanned by the vectors in $\partial D$. Let
$$g:W(n)_{>0}/G_0\to \R_+$$
be the resulting map from the quotient.

The next theorem connects the Auerbach bases to the bifurcation points of $g$.

\begin{theorem} Let $g$ be defined as above. Then if $\{w_1,w_2,\dots,w_n\}\in W_{>0}/G_0$ is a topological bifurcation point, then
$w_1,w_2,\dots,w_n$ has Auerbach property, i.e. the vectors
$$h(w_1)w_1, h(w_2)w_2,\dots,h(w_n)w_n$$
 form an Auerbach basis of $D$. Different bifurcation points correspond to different Auerbach bases.\end{theorem}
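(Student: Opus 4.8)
\emph{Plan.} I would argue the contrapositive on the smooth open manifold $W(n)_{>0}$ and transfer the conclusion to the quotient. Since $G_0$ is finite and acts freely on $W(n)_{>0}$, the projection $W(n)_{>0}\to W(n)_{>0}/G_0$ is a covering; and $\tilde g_D$ is $G_0$-invariant, because permuting the columns of a matrix multiplies $\det$ by the sign of the permutation, changing the sign of a column multiplies $\det$ by $-1$ while leaving $\prod h(v_k)$ fixed (as $h$ is even), and on $G_0$ these two effects cancel. Hence $\tilde g_D|_{W(n)_{>0}}$ is locally the pull‑back of $g$, so a point of $W(n)_{>0}/G_0$ is a bifurcation point of $g$ exactly when some (equivalently every) preimage is a bifurcation point of $\tilde g_D|_{W(n)_{>0}}$. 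Moreover two tuples of $W(n)_{>0}$ yield the same Auerbach basis (in the paper's sense of identifying bases differing by permutation and unit scalars) precisely when they differ by an element of $G$, and positivity of the determinant forces that element into $G_0$; this gives the injectivity assertion. So the theorem reduces to: \emph{if $B=(v_1,\dots,v_n)\in W(n)_{>0}$ does not have the Auerbach property, then $B$ is a topologically regular point of $\tilde g_D$.}

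\emph{One–variable reduction.} Fix $i$ and the other vectors, and let $C_i$ be the cofactor covector, determined by $C_i\cdot x=\det(v_1,\dots,x,\dots,v_n)$ ($x$ in slot $i$); thus $C_i\cdot v_i=\det B>0$, so $C_i\neq0$, and $C_i\cdot v_j=0$ for $j\neq i$. Writing $p(v)=h(v)v=v/\|v\|_D\in\partial D$ for the radial projection, one computes, varying only $v_i$,
$$\tilde g_D(v_1,\dots,v_n)=\Big(\textstyle\prod_{j\neq i}h(v_j)\Big)\,\big(C_i\cdot p(v_i)\big),$$
and $\operatorname{span}\{p(v_j):j\neq i\}=\{C_i=0\}$. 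Hence the biorthogonal covector dual to $h(v_i)v_i$ is the rescaling of $C_i$ with value $1$ at $p(v_i)$, and the vectors $h(v_i)v_i$ form an Auerbach basis iff for every $i$ this covector has $D^*$–norm $1$, i.e. $C_i\cdot p(v_i)=\|C_i\|_{D^*}$, equivalently $p(v_i)$ maximizes the linear functional $C_i$ over $D$ (the ``minimizing'' alternative is excluded since $C_i\cdot p(v_i)=h(v_i)\det B>0$ while $C_i\cdot(-p(v_i))<0$).

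\emph{Construction of a product neighborhood.} Suppose $B$ fails this: $C_i\cdot p(v_i)<\|C_i\|_{D^*}$ for some $i$. Fix $q^\ast\in\partial D$ with $C_i\cdot q^\ast=\|C_i\|_{D^*}$; then $q^\ast$ is not parallel to $v_i$. On a neighborhood of $B$ in $W(n)_{>0}$ consider the \emph{smooth} vector field $E(w)$ whose only nonzero component is the $i$-th, equal to $\eta(w_i):=q^\ast-\langle q^\ast,w_i\rangle w_i\in T_{w_i}S^{n-1}$ (nonzero near $B$), and let $\Psi_t$ be its flow. The point of this field is that it is smooth irrespective of differentiability of the norm, while the estimate we need still comes from convexity of $\|\cdot\|_D$: using $w_i+\eta(w_i)=q^\ast+(1-\langle q^\ast,w_i\rangle)w_i$, the triangle inequality gives $\|\cdot\|_D'(w_i;\eta(w_i))\le\|w_i+\eta(w_i)\|_D-\|w_i\|_D\le 1-\langle q^\ast,w_i\rangle\|w_i\|_D$, and substituting this into the one–sided derivative of $t\mapsto \widetilde C_i(w)\cdot p(\Psi_t(w)_i)$ along the orbit (where $\widetilde C_i(w)$ is the cofactor covector at $w$, constant along the orbit) the algebra collapses to a lower bound $\ge\big(\widetilde C_i(w)\cdot q^\ast-\widetilde C_i(w)\cdot p(w_i)\big)/\|w_i\|_D$. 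For $w$ near $B$ this is bounded below by a positive constant, and since $\prod_{j\neq i}h(w_j)$ is a positive constant along the orbit, the lower right Dini derivative of $t\mapsto\tilde g_D(\Psi_t(w))$ is $\ge c>0$ uniformly. As $\tilde g_D\circ\Psi_t(w)$ is locally Lipschitz in $t$, it is therefore strictly increasing with rate $\ge c$; reparametrizing the flow by the value of $\tilde g_D$ produces a local flow $\Phi_s$ with $\tilde g_D\circ\Phi_s=\tilde g_D+s$, and $(w,s)\mapsto\Phi_s(w)$ identifies a neighborhood $U$ of $B$ with $\big(\tilde g_D^{-1}(\tilde g_D(B))\cap U\big)\times(-\varepsilon,\varepsilon)$ carrying $\tilde g_D$ to the second projection. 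Hence $B$ is topologically regular — contradiction. So every bifurcation point has the Auerbach property, and distinct bifurcation points give distinct bases by the injectivity noted in the first paragraph.

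\emph{Expected main difficulty.} Everything except the third paragraph is routine; the delicate part is carrying out that step for an \emph{arbitrary} symmetric convex body, where the norm need not be differentiable and $\tilde g_D$ is merely locally Lipschitz, so the implicit function theorem is unavailable. The device above — flowing along the explicit smooth field $q^\ast-\langle q^\ast,w_i\rangle w_i$ and controlling $\tilde g_D$ through one–sided directional derivative estimates that follow from convexity of $\|\cdot\|_D$ — is what replaces the smooth normal form; in the differentiable case it degenerates to the classical fact that the critical points of the parallelepiped volume on $(\partial D)^n$ are exactly the Auerbach bases.
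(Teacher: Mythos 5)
Your argument is correct, but it establishes local triviality by a different mechanism than the paper. The paper also argues the contrapositive; however, instead of a flow it builds an explicit chart: on the open set $U$ of configurations where the affine hyperplane $P$ through $v_n$ parallel to ${\rm span}\{v_1,\dots,v_{n-1}\}$ fails to support $D$, it sends $(v_1,\dots,v_n)$ to $\bigl((v_1,\dots,v_{n-1},\tfrac{v_n-b}{|v_n-b|}),\det\bigr)$, where $b$ is the barycenter of the slice $P\cap D$; since that slice has nonempty interior, $v_n$ is recovered from the level of $\det$ together with the radial direction from $b$, so the map is a homeomorphism onto an open subset of a bundle $Q\times\R$ with $\det$ as second coordinate --- which is the required product neighbourhood directly. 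You replace this global parametrization by a pseudo-gradient: the smooth tangent field $q^{*}-\langle q^{*},w_i\rangle w_i$ pushing the offending vector toward a maximizer of the cofactor functional, with monotonicity extracted from one-sided derivatives of the merely Lipschitz norm; your computation collapsing the lower Dini derivative of $t\mapsto C_i\cdot p(\Psi_t(w)_i)$ to $\bigl(C_i\cdot q^{*}-C_i\cdot p(w_i)\bigr)/\|w_i\|_D$ is correct, and the reparametrized flow does yield the product chart. The paper's chart is shorter once one grants continuity of the barycenter of the slices and openness of the image; yours avoids both and is the exact topological analogue of the ``no bifurcation point $\Rightarrow$ locally a projection'' construction that the paper's Appendix uses anyway. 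Two small points to add: the inequality $\|w_i+\eta(w_i)\|_D-\|w_i\|_D\le 1-\langle q^{*},w_i\rangle\|w_i\|_D$ uses $\langle q^{*},w_i\rangle\le 1$, so you should normalize the auxiliary Euclidean structure so that $D$ lies in the Euclidean unit ball (harmless, since the statement does not depend on that choice); and the descent to the quotient together with the injectivity claim is exactly the paper's closing remark that dividing by $G_0$ identifies precisely the tuples giving the same basis up to permutation and sign.
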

\begin{proof}
We have a convex body $D\subset\R^n$, which is centrally symmetric. Suppose that the vectors
$w_1,w_2,\dots,w_{n-1}$ do not form an Auerbach basis. Then, without loss of generality
we can assume that the hyperplane passing through $w_n$ parallel to $w_1, w_2,\dots,w_{n-1}$ does not support $D$ at $w_n$.
Let
\begin{multline*}U=\{(v_1,v_2,\dots,v_n)\in(\partial D)^n\;|\;v_1,v_2,\dots,v_{n-1}\text{ are linearly independent, and }\\ v_n+span\{(v_1,v_2,\dots,v_{n-1}\}\text{ is not a supporting hyperplane at }v_n\}\,.\end{multline*}
Let
$$V=\{(v_1,v_2,\dots,v_{n-1})\in(\partial D)^{n-1}\;|\;v_1,v_2,\dots,v_{n-1}\text{ are linearly independent}\}\,.$$
Let $Q\subset V\times \R^n$ be the sphere bundle  over $V$, whose fiber over
$(v_1,v_2,\dots,v_{n-1})\in V$ is the unit sphere in $span\{v_1,v_2,\dots,v_{n-1}\}$, i.e. we have a fibration
$$S^{n-2}\hookrightarrow Q\twoheadrightarrow V\,.$$
We define a map $$(f_1,f_2):U\to Q\times \R$$ as follows:\begin{itemize}
\item let $P=v_n+span\{v_1,v_2,\dots,v_{n-1}\}$ be the affine hyperplane in $\R^n$, which is parallel to $span\{v_1,v_2,\dots,v_{n-1}\}$ an passes through $v_n$; by the assumption it has nonempty intersection with $int D$,
\item let $b$ be the barycenter of $P\cap D$; clearly $b$ belongs to the relative interior of $(P\cap D)$,
\item let $S(b)\simeq S^{n-2}$ be the unit sphere in the hyperplane $P$ with center in $b$,
\item we  project radially $v_n$ on $S(b)$,
\item we obtain a unit vector
$$f(v_1,v_2,\dots,v_{n-1}):=\tfrac{v_n-b}{|v_n-b|}\in span\{v_1,v_2,\dots,v_{n-1}\}$$
and we put $f_1(v_1,\dots,v_{n-1})=(v_1,\dots,v_{n-1}, f).$
\end{itemize}
In this way we get a continuous section of the bundle $Q$.
The $\R$-coordinate $f_2$ is equal to
$$\det(v_1,v_2,\dots,v_{n})=\pm \,vol(v_1,v_2,\dots,v_{n-1})\cdot dist(v_n,span\{v_1,v
_2,\dots,v_{n-1}\})\,.$$

\obrazek{wwwrys}{8cm}

We show that the resulting function is a homeomorphism onto the image. The image is open. Thus it is locally the product, as in the definition of a regular point of the mapping $f_2=\det:U\to\R$.

The inverse map is the following: suppose $$((v_1,v_2,\dots,v_{n-1}),p)\in Q\subset V\times \R^n\,,$$
i.e.
$$p\in span\{ v_1,v_2,\dots,v_{n-1}\}\,,\quad |p|=1.$$
and let $a\in \R$ be a real number such that
$$int(D\cap P)\not=\emptyset\,,$$
where $P$ is the affine hyperplane in $\R^n$ which is parallel to $span\{v_1,v_2,\dots,v_{n-1}\}$ and $$\det(v_1,v_2,\dots,v_{n-1},v)=a$$ for any $v\in P$.
(This exactly means that $(v_1,v_2,\dots,v_{n-1},p,a)$ is in the image of the map $(f_1,f_2)$.)
Let $b$ be the barycenter of $D\cap P$. Define $v_n$ as the central projection of $b+p$ form the center in $b$ to $\partial D$.

The last part of the statement follows from the fact that dividing by $G_0$ identifies bases which differ by permutation or multiplication by numbers of absolute value one.
\end{proof}

Suppose that $g$ is a differentiable function. Not every
critical point of $g$ is a bifurcation point, but it can be easily shown that:
\begin{proposition}
If $g$ is a twice differentiable Morse function then any critical point corresponds to an Auerbach basis.
\end{proposition}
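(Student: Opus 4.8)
The plan is to obtain this as a direct consequence of the preceding theorem, which already shows that every topological bifurcation point of $g$ on $W(n)_{>0}/G_0$ gives rise to an Auerbach basis of $D$. Hence it suffices to prove that, when $g$ is a twice differentiable Morse function, every critical point of $g$ is a bifurcation point in the sense of Definition~\ref{bifur}. As the remark following Definition~\ref{bifur} already flags, this is not automatic for a general differentiable function --- the map $x\mapsto x^3$ has a critical point at $0$ which is topologically regular --- so nondegeneracy of the Hessian must be used in an essential way.

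First I would invoke the Morse lemma, valid under the $C^2$ hypothesis: at a critical point $p$ of the Morse function $g$ the Hessian is nondegenerate, so there are local coordinates $(x_1,\dots,x_m)$ centred at $p$, where $m=\dim(W(n)_{>0}/G_0)$, in which $g(x)=g(p)-\sum_{i\le k}x_i^2+\sum_{k<i\le m}x_i^2$ with $k$ the Morse index. Next I would show that such a $p$ cannot be a topologically regular point. Suppose it were: then there is a neighbourhood $U\simeq S\times(a-\eps,a+\eps)$ of $p$ on which $g$ is the projection onto the second factor, so that $U\cap\{g\le a\}\simeq S\times(a-\eps,a]$ and $U\cap\{g<a\}\simeq S\times(a-\eps,a)$; since the inclusion $S\times(a-\eps,a)\hookrightarrow S\times(a-\eps,a]$ is a homotopy equivalence, the relative homology $H_*\big(U\cap\{g\le a\},\,U\cap\{g<a\}\big)$ vanishes in all degrees. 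On the other hand, the local Morse-theoretic (handle attachment) computation in the quadratic normal form identifies $H_*\big(U\cap\{g\le g(p)\},\,U\cap\{g<g(p)\}\big)$ with $H_*(\mathbb B^k,\partial\mathbb B^k)$, which is $\Z$ concentrated in degree $k$, hence nonzero. This contradiction shows that $p$ is a bifurcation point.

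Finally, applying the preceding theorem to the bifurcation point $p=\{w_1,\dots,w_n\}$ yields that $h(w_1)w_1,\dots,h(w_n)w_n$ form an Auerbach basis of $D$; since $p$ was an arbitrary critical point of $g$, the proposition follows. The one genuinely non-formal ingredient is the middle paragraph --- separating the differential notion of critical point from the topological notion of bifurcation point --- and there the cleanest device is the local-homology computation of Morse theory, which stays valid for merely $C^2$ functions; the rest is bookkeeping and citation of what has already been established.
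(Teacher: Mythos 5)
Your argument is correct and follows exactly the route the paper intends (the paper leaves the proof to the reader, pointing precisely to the Morse lemma and the topological description of a neighbourhood of a critical point): reduce to showing that a nondegenerate critical point cannot be topologically regular, and detect this by a local homological invariant that vanishes for a product neighbourhood but not for the quadratic normal form. The one technical point worth tightening is that your two relative-homology computations are carried out on two \emph{different} neighbourhoods, so the invariant should be phrased in an excision-invariant way, e.g.\ as the critical group $H_*\bigl(\{g\le a\}\cap U,\ (\{g\le a\}\setminus\{p\})\cap U\bigr)$, which is independent of the small neighbourhood $U$, is zero at a topologically regular point (push the product down past level $a$), and is $\Z$ concentrated in degree $k$ at a nondegenerate critical point of index $k$.
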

We leave the argument to the reader, it follows from the topological description of the neighborhood of the critical point of the function (Morse lemma).

\section{Flag varieties}\label{section3}
Let us consider the quotient
$$W(n)_{\not=0}/\Z_2^n\simeq W(n)_{>0}/\Z_2^{n-1}\,.$$
The Gram-Schmidt orthogonalization $\GS:W(n)_{\not=0}\to O(n)$ commutes with the right action of $\Z_2^n$:
$$\GS(B\cdot A)=\GS(B)\cdot A$$
for $B\in W(n)_{\not=0}$ and a diagonal matrix $A\in \Z_2^n\subset O(n)$.
That is so because if
$$\GS(v_1,v_2,\dots,v_n)=(w_1,w_2,\dots,w_n)$$
then $w_k$ is the normalized projection of $v_k$ on $span(v_1,v_2,\dots,v_{k-1})^\perp$. If we change the sign of $v_k$, then none of $span(v_1,v_2,\dots,v_{\ell})$ is changed, $w_\ell$ remains the same except for $w_k$ which changes the sign.
Therefore we obtain the decomposition
$$ W(n)_{\not=0}/\Z_2^n\simeq O(n)/\Z_2^n\times
\mathring{\mathbb B}^{\frac{n(n-1)}2}$$
or
$$W(n)_{>0}/\Z_2^{n-1}\simeq SO(n)/\Z_2^{n-1}\times
\mathring{\mathbb B}^{\frac{n(n-1)}2}$$
The space $O(n)/\Z_2^n$ parameterizes  the collections of $n$ perpendicular lines in $\R^n$. This space can be identified with the real flag variety $Fl(n,\R)$.

\section{cohomology}\label{section4}
The cohomology  of the flag variety with $\Z_2$ coefficients is well known:

\begin{theorem}\label{flagiR} The cohomology $H^*(Fl(n,\R);\Z_2)$ is a $\Z_2$ algebra generated by elements from the first gradation
(Stiefel-Whitney classes of the tautological line bundles). The dimension is equal to $n!$.\end{theorem}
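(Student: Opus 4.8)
The plan is to identify $Fl(n,\R)=O(n)/\Z_2^n$ with the variety of complete flags $\{0\}\subset L_1\subset L_1\oplus L_2\subset\cdots\subset\R^n$ of mutually perpendicular lines, and to compute its $\Z_2$-cohomology by the classical method of the Leray--Hirsch theorem applied to the iterated projective bundle structure. First I would observe that forgetting the last line gives a fibration $\R\P^{n-1}\hookrightarrow Fl(n,\R)\to Fl(n-1,\R)$, or more symmetrically that $Fl(n,\R)$ is the total space of the projectivization $\P(\gamma^\perp)$ of the orthogonal complement of the tautological line bundle $\gamma$ over $Fl(n-1,\R)$; iterating, $Fl(n,\R)$ is built from a point by $n-1$ successive projective bundles of decreasing fiber dimension, so $\dim Fl(n,\R)=\sum_{k=1}^{n-1}k=\tfrac{n(n-1)}{2}$ and the Euler characteristic (over $\Z_2$, i.e. total Betti number) multiplies as $n\cdot(n-1)\cdots 2\cdot 1=n!$.

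The key computational step is the Leray--Hirsch / projective bundle formula with $\Z_2$ coefficients: for a rank-$r$ real vector bundle $E\to B$, $H^*(\P(E);\Z_2)$ is a free $H^*(B;\Z_2)$-module with basis $1,t,t^2,\dots,t^{r-1}$, where $t=w_1(\mathcal O(1))$ is the first Stiefel--Whitney class of the tautological line bundle on $\P(E)$, subject to the single relation $\sum_{i=0}^{r} w_i(E)\,t^{r-i}=0$. Applying this inductively with $B=Fl(k-1,\R)$ and $E=\gamma^\perp$ (rank $n-k+1$ when we are adjoining the $k$-th line), and writing $t_k$ for the resulting degree-one class, I get that $H^*(Fl(n,\R);\Z_2)$ is generated as a $\Z_2$-algebra by the classes $t_1,\dots,t_n$, each of degree one, and that as a $\Z_2$-vector space it has a monomial basis $\{\,t_1^{e_1}\cdots t_n^{e_n}\;:\;0\le e_k\le n-k\,\}$; counting these gives $\prod_{k=1}^{n}(n-k+1)=n!$, confirming both the dimension count and the claim that the generators live in the first gradation. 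The classes $t_k$ are exactly the Stiefel--Whitney classes $w_1$ of the tautological line bundles $L_k$ over $Fl(n,\R)$, since the tautological $\mathcal O(1)$ on each projectivization pulls back to $L_k$ (or its dual, which over $\Z_2$ is the same); this pins down the ``Stiefel--Whitney classes of the tautological line bundles'' description in the statement. Equivalently, one can present the ring as $\Z_2[t_1,\dots,t_n]$ modulo the ideal generated by the positive-degree parts of $\prod_{k}(1+t_k)$, i.e. by $e_1,\dots,e_n$ in the elementary symmetric functions, which is a complete intersection of the right total dimension $n!$.

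I expect the main obstacle to be purely bookkeeping rather than conceptual: one must be careful that the Leray--Hirsch hypothesis genuinely holds at each stage — i.e. that the fiber classes $1,t_k,\dots,t_k^{n-k}$ really do restrict to a basis of the cohomology of the fiber $\R\P^{n-k}$ — which follows because $t_k$ restricts to the generator of $H^*(\R\P^{n-k};\Z_2)$, and then to track the bundle $\gamma^\perp$ and its total Stiefel--Whitney class correctly through the induction so that the relations come out as the symmetric-function relations above. An alternative, essentially equivalent route that avoids some of this is to use the fibration $\Z_2^n\to O(n)\to Fl(n,\R)$ together with the known cohomology $H^*(O(n);\Z_2)$ and a Serre spectral sequence, or to quote the standard fact that $H^*(Fl(n,\R);\Z_2)$ coincides with the $\Z_2$-cohomology of the real flag manifold of $GL_n$; but the projective-bundle induction is self-contained and gives the generators-and-relations description directly, so that is the approach I would write up.
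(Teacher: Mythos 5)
Your proposal is correct and follows essentially the same route as the paper, which likewise deduces the theorem inductively from the Leray--Hirsch theorem and records the presentation $\Z_2[x_1,\dots,x_n]/(\sigma_1,\dots,\sigma_n)$ with total dimension $n!$. The only cosmetic difference is that you run the induction through the tower of projective bundles (fibers $\R\P^{n-k}$ over partial flag varieties), whereas the paper uses the fibration $Fl(n-1,\R)\hookrightarrow Fl(n,\R)\twoheadrightarrow\R\P^{n-1}$; both are the same Leray--Hirsch argument.
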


\begin{remark}\rm The statement of Theorem \ref{flagiR} follows inductively from the Leray-Hirsch Theorem \cite[Ch. 17.1]{Hus} applied to the fibration
$$Fl(n-1,\R)\hookrightarrow Fl(n,\R) \twoheadrightarrow \R\P^{n-1}\,.$$
In fact
$$H^*(Fl(n,\R);\Z_2)\simeq \Z_2[x_1,x_2,\dots,x_n]/(\sigma_1,\sigma_2,\dots,\sigma_n)\,,$$
where $x_i$ are generators in degree one corresponding to the Stiefel-Whitney classes of the tautological line bundles, and $\sigma_i=\sigma_i(x_1,x_2,\dots,x_n)$ are the elementary symmetric functions. It is hard to give a precise
reference to that fact. It follows from the corresponding statement for the complex flag variety, cf. \cite[20.3(b)]{Bor}.\end{remark}

We compute the cohomology with rational coefficients of $W(n)_{>0}/G_0$  applying  the following result:
\begin{proposition}\cite[III.7.2]{Br}\label{bredon} Let $X$ be a topological paracompact space with an action of a finite group $G$. The cohomology of the quotient is isomorphic to the invariant part of the original cohomology:
$$H^*(X/G;\Q)\simeq H^*(X;\Q)^{G}\,.$$\end{proposition}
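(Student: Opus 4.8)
The plan is to deduce this from a transfer argument whose real content is that $\Q[G]$ is semisimple, so that ``take $G$-invariants'' is an exact functor on $\Q$-vector spaces. Write $\pi\colon X\to Y:=X/G$ for the quotient map. Since $G$ is finite and acts by homeomorphisms, $\pi$ is closed (for closed $C\subseteq X$ its saturation $\bigcup_{g\in G}gC$ is a finite union of closed sets) with finite fibres, and $Y$ is again paracompact. One half of the isomorphism is immediate: from $\pi\circ g=\pi$ for all $g\in G$ we get $g^*\circ\pi^*=\pi^*$, so $\pi^*\colon H^*(Y;\Q)\to H^*(X;\Q)$ has image inside the invariants $H^*(X;\Q)^G$. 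It remains to see that $\pi^*$ is injective and hits all of $H^*(X;\Q)^G$.

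For this I would work with sheaf cohomology of the constant sheaf $\underline{\Q}$, which on a paracompact space agrees with \v{C}ech cohomology and, for the manifolds to which the proposition is applied in this paper, with singular cohomology. Being closed with finite fibres, $\pi$ is a finite map, so $\pi_*$ is exact with $(\pi_*\mathcal{F})_y\cong\bigoplus_{x\in\pi^{-1}(y)}\mathcal{F}_x$; hence the Leray spectral sequence of $\pi$ collapses to a $G$-equivariant isomorphism $H^*(X;\Q)\cong H^*(Y;\pi_*\underline{\Q})$, where $G$ acts through its natural action on the sheaf $\pi_*\underline{\Q}$. Now $e:=\frac1{|G|}\sum_{g\in G}g$ is an idempotent sheaf endomorphism of $\pi_*\underline{\Q}$ with image the invariant subsheaf, and $(\pi_*\underline{\Q})^G=\underline{\Q}_Y$: a $G$-invariant locally constant $\Q$-valued function on $\pi^{-1}(U)$ is, via the quotient map, the same thing as a locally constant function on $U$. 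Therefore $\pi_*\underline{\Q}\cong\underline{\Q}_Y\oplus(1-e)\pi_*\underline{\Q}$ as sheaves, and applying $H^*(Y;-)$ writes $H^*(X;\Q)$ as a direct sum of the piece on which $e$ acts as the identity, namely $H^*(Y;\underline{\Q}_Y)=H^*(Y;\Q)$, and the piece on which $e$ acts as $0$. Since $e$ acts on $H^*(X;\Q)$ as the averaging operator $\frac1{|G|}\sum_g g^*$, its image is exactly $H^*(X;\Q)^G$, so the first summand is $H^*(X;\Q)^G$; chasing the identifications, this summand is precisely the image of $\pi^*$, which is therefore an isomorphism onto it. (Equivalently: $\tfrac1{|G|}$ times the map induced by the splitting is a transfer $\mathrm{tr}$ with $\mathrm{tr}\circ\pi^*=\mathrm{id}$ and $\pi^*\circ\mathrm{tr}=\tfrac1{|G|}\sum_g g^*$.)

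The main obstacle is that the $G$-action is not assumed free, so $\pi$ is not a covering map and the elementary transfer on singular chains --- lifting each singular simplex to its $|G|$ preimages --- is unavailable; this is exactly what forces the detour through sheaf (or \v{C}ech) cohomology, using that $\pi$ is a finite map. It is equally essential that $|G|$ be invertible in the coefficient ring, and this is not a technical artefact: already mod $2$ the statement fails, for instance for $X=S^{\infty}$ with the antipodal action, where $H^*(S^{\infty};\Z_2)^{\Z_2}=\Z_2$ sits only in degree $0$ while $H^*(\R\P^{\infty};\Z_2)=\Z_2[x]$.
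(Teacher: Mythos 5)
The paper does not prove this proposition at all --- it is quoted verbatim from Bredon \cite[III.7.2]{Br} --- so there is no internal proof to compare against; what you have written is a genuine proof where the paper has only a citation. Your argument is correct and is, at bottom, the same mechanism as in the cited source: a transfer made possible by the invertibility of $|G|$ in $\Q$. Bredon constructs the transfer directly on Alexander--Spanier/\v{C}ech cochains by summing over the group, whereas you package it sheaf-theoretically: the quotient map $\pi$ is finite, hence $\pi_*$ is exact and $H^*(X;\Q)\cong H^*(X/G;\pi_*\underline{\Q})$, and the averaging idempotent splits off $(\pi_*\underline{\Q})^G=\underline{\Q}_{X/G}$ as a direct summand. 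All the individual steps check out: closedness of $\pi$ gives cofinality of saturated neighbourhoods of a fibre, hence the stalkwise description of $\pi_*$ and the collapse of the Leray spectral sequence; the image of an idempotent sheaf endomorphism is a sheaf (the kernel of $1-e$); the unit $\underline{\Q}_Y\to\pi_*\underline{\Q}$ induces $\pi^*$; and the $S^\infty$ example correctly isolates why characteristic $0$ is essential. The one caveat worth making explicit is that for a general paracompact $X$ the statement holds for sheaf (equivalently \v{C}ech or Alexander--Spanier) cohomology, not for singular cohomology --- you flag this, and it is harmless here since the paper only applies the proposition to $W(n)_{>0}$ and its complex analogue, which are manifolds (where, the action of $G_0$ being free, even the elementary covering-space transfer would have sufficed). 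Your route buys a statement in the full generality claimed, at the cost of invoking the Leray spectral sequence where Bredon's cochain-level transfer is more elementary.
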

We apply this general rule in our situation:
\begin{theorem}There is an isomorphism
$$H^*(W(n)_{>0}/G_0;\Q)\simeq H^*(SO(n);\Q)\,.$$
\end{theorem}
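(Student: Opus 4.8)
The plan is to combine Proposition \ref{Wzero}, which identifies $W(n)_{>0}$ up to homotopy with $SO(n)$, with Proposition \ref{bredon}, the rational transfer isomorphism. Since $W(n)_{>0}$ is an open subset of $(S^{n-1})^n$ it is a paracompact manifold, and $G_0$ is a finite group, being a subgroup of $G=\Z_2^n\rtimes\Sigma_n$. Hence Proposition \ref{bredon} gives
$$H^*(W(n)_{>0}/G_0;\Q)\simeq H^*(W(n)_{>0};\Q)^{G_0}\,.$$
So the whole statement reduces to showing that the right action of $G_0$ on $H^*(W(n)_{>0};\Q)\simeq H^*(SO(n);\Q)$ is \emph{trivial}: then the invariant subring is all of the cohomology and we are done.

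To prove triviality I would avoid the Gram--Schmidt map $\GS$, which is \emph{not} $G_0$-equivariant because it does not commute with the permutation part of $G_0$, and work instead with the inclusion
$$i\colon SO(n)\hookrightarrow W(n)_{>0}$$
that regards a special orthogonal matrix as an $n$-tuple of unit vectors of positive determinant. First, $i$ is a homotopy equivalence: under the diffeomorphism of Proposition \ref{Wzero}, $B\mapsto(\GS(B),\GS(B)^{-1}B)\in SO(n)\times\GS^{-1}(I)$, the map $i$ corresponds to $A\mapsto(A,I)$, and $\GS^{-1}(I)\cong\mathring{\mathbb B}^{\frac{n(n-1)}2}$ is contractible. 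Second, $i$ is $G_0$-equivariant: an element $a\sigma\in G_0$ is an orthogonal matrix with $\det(a\sigma)=\det(a)\det(\sigma)=1$, so $G_0\subset SO(n)$, and for $g\in G_0$ and $A\in SO(n)$ one has $Ag\in SO(n)$; hence the right $G_0$-action on $W(n)_{>0}$ restricts on $i(SO(n))$ to right translation in the group $SO(n)$, i.e. $R_g\circ i=i\circ r_g$ with $r_g(A)=Ag$.

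It follows that $i^*\colon H^*(W(n)_{>0};\Q)\to H^*(SO(n);\Q)$ is an isomorphism intertwining the action $R_g^*$ of $g\in G_0$ with the right-translation action $r_g^*$. Since $SO(n)$ is connected, a path $\gamma$ from the identity to $g$ gives a homotopy $(A,t)\mapsto A\gamma(t)$ from $\mathrm{id}_{SO(n)}$ to $r_g$, so $r_g^*=\mathrm{id}$ on $H^*(SO(n);\Q)$, and therefore $R_g^*=\mathrm{id}$ for every $g\in G_0$. Substituting this into the transfer isomorphism yields $H^*(W(n)_{>0}/G_0;\Q)\simeq H^*(W(n)_{>0};\Q)\simeq H^*(SO(n);\Q)$. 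The only genuine subtlety is the one flagged above — the failure of $\GS$ to be $G_0$-equivariant — which is precisely why the argument must be routed through the equivariant inclusion $i$ together with the connectedness of $SO(n)$, rather than through the product decomposition directly; everything else is formal.
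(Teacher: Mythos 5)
Your proof is correct and follows essentially the same route as the paper's: homotopy equivalence $W(n)_{>0}\simeq SO(n)$ from Proposition \ref{Wzero}, triviality of the $G_0$-action on cohomology because each element of $G_0\subset SO(n)$ is connected to the identity by a path, and then Proposition \ref{bredon}. Your extra care in routing the argument through the $G_0$-equivariant inclusion $SO(n)\hookrightarrow W(n)_{>0}$ (rather than the non-equivariant Gram--Schmidt decomposition) makes explicit a compatibility the paper leaves implicit, but it is the same argument.
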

\begin{proof}By Proposition \ref{Wzero} the space $W(n)_{>0}$ retracts to $W(n)_1=SO(n)$. Observe that the action of $G_0$ on $H^*(SO(n))$ is trivial, since every element of $G_0$ can be connected with the identity by a path in $SO(n)$.
By Proposition \ref{bredon} we obtain
$$H^*(W(n)_{>0}/G_0,\Q)=H^*(W(n)_{>0};\Q)^{G_0}=H^*(SO(n);\Q)^{G_0}=H^*(SO(n);\Q)\,.$$
\end{proof}

Since the rational cohomology of $SO(n)$ is the same as the cohomology of the product of $\lfloor\frac n2\rfloor$ spheres (cf. \cite[Prop. 3D4]{Hat} ), we obtain:
\begin{corollary}

$$\dim(H^*(W(n)_{>0}/G_0;\Q))=2^{\lfloor\frac n2\rfloor}$$

\end{corollary}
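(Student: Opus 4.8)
The plan is to reduce the statement immediately to a known computation and then apply Künneth. First I would invoke the theorem proved just above, which identifies $H^*(W(n)_{>0}/G_0;\Q)$ with $H^*(SO(n);\Q)$; after this step the problem is entirely about the orthogonal group and no longer refers to Auerbach bases, the body $D$, or the action of $G_0$.

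Next I would recall the structure of the rational cohomology of $SO(n)$, as in \cite[Prop. 3D4]{Hat}: rationally $SO(n)$ has the cohomology of a product of $\lfloor n/2\rfloor$ spheres. Concretely, for $n=2m+1$ the relevant spheres have dimensions $3,7,\dots,4m-1$, while for $n=2m$ they have dimensions $3,7,\dots,4m-5$ together with one further sphere of dimension $2m-1$; in both cases the number of spherical factors is exactly $\lfloor n/2\rfloor$. (Equivalently, $H^*(SO(n);\Q)$ is an exterior algebra on $\lfloor n/2\rfloor$ generators, all of odd degree, which would serve just as well for a dimension count.)

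Then I would finish with the Künneth theorem over the field $\Q$: the rational cohomology of $S^{d_1}\times\cdots\times S^{d_k}$ is $\bigotimes_{i=1}^k H^*(S^{d_i};\Q)$, and each factor $H^*(S^{d_i};\Q)$ is two-dimensional (one class in degree $0$ and one in degree $d_i$). Hence the total dimension of the product is $2^k$, and taking $k=\lfloor n/2\rfloor$ gives $\dim H^*(W(n)_{>0}/G_0;\Q)=2^{\lfloor n/2\rfloor}$, as asserted.

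I do not expect a genuine obstacle here: all of the substantive topology — the deformation retraction of $W(n)_{>0}$ onto $SO(n)$, the averaging of Proposition \ref{bredon}, and the triviality of the $G_0$-action on $H^*(SO(n))$ — has already been carried out in the theorem being quoted, so what remains is bookkeeping. The only point requiring mild care is the parity split when reading off the sphere dimensions of $SO(n)$, together with the verification that the number of spherical factors equals $\lfloor n/2\rfloor$ uniformly in the even and odd cases.
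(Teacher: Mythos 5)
Your proof is correct and follows exactly the paper's route: the corollary is deduced from the preceding theorem identifying $H^*(W(n)_{>0}/G_0;\Q)$ with $H^*(SO(n);\Q)$, combined with the fact (cited from Hatcher, Prop.\ 3D4) that $SO(n)$ has the rational cohomology of a product of $\lfloor n/2\rfloor$ spheres, whence a total dimension of $2^{\lfloor n/2\rfloor}$. Your explicit listing of the sphere dimensions in the even and odd cases is accurate and merely makes the paper's one-line deduction more detailed.
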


\begin{corollary}\label{morseoszac} Let $$g:W(n)_{\geq 0}/G_0\to \R$$ be a   function such that $g^{-1}(0)=W(n)_{0}/G_0$ and $g$ is a Morse function on $W(n)_{>0}/G_0$. Then $g$ has at least $2^{\lfloor\frac n2\rfloor}$ critical points.
\end{corollary}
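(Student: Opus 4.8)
The plan is to deduce the corollary from the classical Morse inequalities, applied not to the open manifold $W(n)_{>0}/G_0$ itself but to a suitable compact superlevel set of $g$. Write $M=W(n)_{>0}/G_0$ and $\overline M=W(n)_{\geq 0}/G_0$; the latter is compact, being the continuous image under the quotient map of the closed (hence compact) subset $W(n)_{\geq 0}\subset(S^{n-1})^n$, and $g$ is defined and continuous on it. Since by hypothesis $g^{-1}(0)=W(n)_0/G_0=\overline M\setminus M$ and $g>0$ on $M$, the restriction $g|_M\colon M\to(0,\infty)$ is proper: the preimage of a compact $K\subset(0,\infty)$ equals $\{x\in\overline M: g(x)\in K\}$, which is closed in the compact space $\overline M$ and disjoint from $g^{-1}(0)$, hence compact.

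If $g$ has infinitely many critical points on $M$ the assertion is trivial, so I would assume the critical set is finite. Its values then lie in an interval $[c_1,\max g]$ with $c_1>0$, and I would fix a regular value $c_0\in(0,c_1)$ of $g$ and put $M^{c_0}=\{x\in M: g(x)\geq c_0\}$. By properness this set is compact, by regularity of $c_0$ it is a smooth compact manifold with boundary $g^{-1}(c_0)$, and it contains every critical point of $g$. Since $g^{-1}((0,c_0])$ contains no critical points, the gradient flow of $g$, reparametrised so that $g$ increases at unit rate, is defined on $g^{-1}((0,c_0])$, remains inside the compact sets $g^{-1}([c,c_0])$, and carries each point to the level $c_0$ in finite time; this gives a deformation retraction of $M$ onto $M^{c_0}$, so $H^*(M;\Q)\cong H^*(M^{c_0};\Q)$.

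Next I would run Morse theory for $-g$ on the compact manifold with boundary $M^{c_0}$. The critical points of $-g$ are exactly those of $g$ (with complementary Morse indices) and all lie in the interior, while $-g$ attains its maximum, the value $-c_0$, along the whole boundary $g^{-1}(c_0)$. Filtering $M^{c_0}$ by the sublevel sets $\{-g\leq a\}$ and attaching a cell each time a critical value is passed therefore exhibits $M^{c_0}$ as homotopy equivalent to a CW complex with exactly one cell for every critical point of $g$. Consequently $\#\{\text{critical points of }g\}\geq\dim H^*(M^{c_0};\Q)=\dim H^*(M;\Q)$, and by the preceding Corollary the right-hand side equals $2^{\lfloor n/2\rfloor}$.

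The one genuine difficulty is that $W(n)_{>0}/G_0$ is open rather than compact, which is exactly why the hypothesis $g^{-1}(0)=W(n)_0/G_0$ is imposed: it makes $g|_M$ proper, so that the compact superlevel set $M^{c_0}$ simultaneously absorbs all the critical points and stays homotopy equivalent to $M$, after which the compact theory applies verbatim. The remaining ingredients --- existence of a regular value below the lowest critical value, completeness of the truncated gradient flow on $g^{-1}((0,c_0])$, and the standard handle-attachment lemmas on a manifold with boundary on which the function is extremal along that boundary --- are routine, and I would only indicate them.
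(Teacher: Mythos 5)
Your argument is correct and follows essentially the same route as the paper: both pass to a superlevel set $M_{\geq c_0}$ (the paper's $M_{>\eps}$) that carries the homotopy type of $M=W(n)_{>0}/G_0$, run Morse theory for $-g$ on it, and conclude that the number of critical points is at least $\dim H^*(M;\Q)=2^{\lfloor n/2\rfloor}$ from the preceding corollary. Your write-up merely makes explicit the properness and compact-manifold-with-boundary details that the paper leaves implicit.
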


\begin{proof}Let $M=W_{>0}/G_0$.
Suppose $g$ has only a finite number of critical points (otherwise we are done). Then for $\eps$ sufficiently close to 0 the homotopy type of $M_{>\eps}$  is the same as $M$. We apply Morse theory for  the function $-g$. For sufficiently large $a\in \R$ we have $M_{\geq a}=\emptyset$ and between $a$ and $\eps$ there are critical values $t$, each of which provokes a change of $\dim(H^*(M_{>t}))$ by one. It follows that there have to be at least $\dim(H^*(M;\Q))$ critical points.\end{proof}

Using properties of the fundamental group we can improve a little bit the bound from the above  corollary.
The fundamental group  $$\pi_1(W(n)_{>0}/G_0)=\pi_1(SO(n)/G_0)$$ fits  into the short exact sequence
$$0\to\Z_2\to\pi_1(SO(n)/G_0)\to G_0\to 0\,.$$
For $n\geq 3$ this group is not abelian, since the quotient $G_0$ is not abelian. (The cyclic permutation of coordinates does not commute with diagonal matrices.)

\begin{corollary} Suppose $n\geq3$. Then $g$   satisfying the assumption of Corollary \ref{morseoszac} has at least $2^{\lfloor\frac n2\rfloor}+4$ critical points.
\end{corollary}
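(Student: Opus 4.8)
The plan is to refine the counting in the proof of Corollary~\ref{morseoszac} by recording the ranks of the differentials of the rational Morse complex, and then to use the fact that $\pi_1$ is not cyclic to force two extra one-handles.

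Assume $g$ has only finitely many critical points (otherwise there is nothing to prove). As in the proof of Corollary~\ref{morseoszac}, set $M=W(n)_{>0}/G_0$; applying classical Morse theory to $-g$ one assembles, starting from the empty set and attaching one cell per critical point as the level decreases through the critical values of $-g$, a finite CW complex $K$ homotopy equivalent to $M$, with exactly $c_k$ cells in dimension $k$, where $c_k$ is the number of critical points of index $k$ (with respect to $-g$). Let $C_*$ be the cellular chain complex of $K$ with rational coefficients; then $\dim_\Q C_k=c_k$ and $H_*(C_*)\cong H_*(M;\Q)$. Put $r_k=\mathrm{rank}(\partial_k\colon C_k\to C_{k-1})$, with $r_0=0$, and $b_k=\dim_\Q H_k(M;\Q)$. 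From $\dim C_k=\dim\ker\partial_k+r_k$ and $\dim\ker\partial_k=b_k+r_{k+1}$ we get $c_k=b_k+r_k+r_{k+1}$, hence
$$\sum_k c_k=\sum_k b_k+2\sum_{k\ge 1}r_k=2^{\lfloor n/2\rfloor}+2\sum_{k\ge 1}r_k,$$
where we used the equality $\dim_\Q H^*(W(n)_{>0}/G_0;\Q)=2^{\lfloor n/2\rfloor}$ established above. So it suffices to prove $\sum_{k\ge1}r_k\ge 2$.

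I would get this from $c_1$. Since $n\ge 3$ we have $H^1(SO(n);\Q)=0$, so $b_1=\dim_\Q H^1(W(n)_{>0}/G_0;\Q)=0$, and the relation $c_1=b_1+r_1+r_2$ gives $r_1+r_2=c_1$; thus $\sum_{k\ge1}r_k\ge c_1$ and
$$\#\{\text{critical points of }g\}=\sum_k c_k\ge 2^{\lfloor n/2\rfloor}+2c_1.$$
It then remains to check $c_1\ge 2$. The $1$-skeleton $K^{(1)}$ is connected (attaching cells of dimension $\ge 2$ does not affect $\pi_0$), so $\pi_1(K^{(1)})$ is a free group of rank $c_1-c_0+1$, and the inclusion $K^{(1)}\hookrightarrow K$ induces a surjection $\pi_1(K^{(1)})\twoheadrightarrow\pi_1(M)\cong\pi_1(SO(n)/G_0)$. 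A free group of rank $\le 1$ has only cyclic quotients, whereas $\pi_1(SO(n)/G_0)$ is non-abelian for $n\ge 3$ by the short exact sequence $0\to\Z_2\to\pi_1(SO(n)/G_0)\to G_0\to 0$ discussed above (the quotient $G_0$ is non-abelian). Hence $c_1-c_0+1\ge 2$; since $K$ is nonempty, $c_0\ge 1$, and therefore $c_1\ge 2$, which completes the argument.

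The only delicate point — already handled in the proof of Corollary~\ref{morseoszac} — is that $g$ lives on the open manifold $W(n)_{>0}/G_0$ rather than on a closed one: one works with the superlevel sets $\{g>\eps\}$, which for small $\eps>0$ are homotopy equivalent to $M$ and carry the above cell structure because all critical values of $g$ are positive and finite in number. Granting this, the whole proof is linear algebra over $\Q$ together with the remark that a non-cyclic fundamental group forces a second one-cell; no new geometric input beyond the previous corollaries is needed.
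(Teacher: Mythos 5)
Your proof is correct and follows essentially the same route as the paper: the extra $4$ comes from the fact that $\pi_1(W(n)_{>0}/G_0)$ is non-cyclic (forcing at least two index-one critical points) combined with $H_1(SO(n);\Q)=0$ (forcing these to be cancelled by further cells), on top of the $2^{\lfloor n/2\rfloor}$ from the Betti numbers. Your bookkeeping via the ranks $r_k$ of the cellular boundary maps is a clean, rigorous formalization of the paper's sketchier statement that "two critical points of index 2 kill the generators of $\pi_1$ in $H_1$."
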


\begin{proof}A critical value $t$, provokes a modification of the homotopy type of $M_{>t}$. It follows that there has to be at least two critical points of index 1 because such points correspond to the generators of $\pi_1(M_{>\eps})$ (note that $\pi_1$ is not generated by a single element). Since  $H_1(SO(n),\Q)=0$, there has to be at least two critical points of index 2 which kill the generators of $\pi_1$ in $H_1$. The remaining critical points come from the generators of  homology of degree at least three.\end{proof}

We deal now with the complex case.
Here instead of $W$ we study a collection of $n$ unit vectors in $\C^n$. Denote this space by $W(n)^\C$ (it is the product of spheres $(S^{2n-1})^n$). The Gram-Schmidt orthogonalization can be considered as a map
$$\GS:W(n)^\C_{\not=0}\to U(n)\,.$$
where $W(n)^\C_{\not=0}$ is the set of linearly independent vectors, which is identified with the set of matrices with determinant not equal to zero. As before we have
\begin{proposition}
$$W(n)^\C_{\not=0}\simeq U(n)\times
\mathring{\mathbb B}^{n(n-1)}\,.$$
\end{proposition}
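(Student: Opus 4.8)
The plan is to repeat, \emph{mutatis mutandis}, the proof of Proposition \ref{Wzero}, with $SO(n)$ replaced by $U(n)$ and the real Gram--Schmidt process replaced by its Hermitian version. First I would let $U(n)$ act on $W(n)^\C_{\not=0}$ by left multiplication of matrices. The map $\GS:W(n)^\C_{\not=0}\to U(n)$ sends a matrix $B$ with linearly independent unit columns to the unitary factor $Q$ in the Gram--Schmidt decomposition $B=QR$, where $R$ is upper triangular with positive real diagonal entries; this decomposition is unique, and from $AB=(AQ)R$ one reads off the equivariance $\GS(AB)=A\cdot\GS(B)$ for $A\in U(n)$. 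Exactly as in the real case, equivariance yields a diffeomorphism
$$W(n)^\C_{\not=0}\;\simeq\;U(n)\times\GS^{-1}(I)\,,$$
via $B\mapsto(\GS(B),\,\GS(B)^{-1}B)$ with inverse $(A,C)\mapsto AC$; smoothness away from degenerate matrices is the usual smoothness of the Gram--Schmidt formulas.

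It then remains to identify the fiber $\GS^{-1}(I)$. A matrix lies in it precisely when it is upper triangular with positive real diagonal and with columns of norm one. Writing the strictly upper-triangular entries as complex numbers $a_{ij}$ for $1\le i<j\le n$, the $j$-th column is
$$\big(a_{1j},a_{2j},\dots,a_{j-1,j},\ \sqrt{1-\textstyle\sum_{i=1}^{j-1}|a_{ij}|^2}\,,\ 0,\dots,0\big)^{\!\top}\,,$$
so the only constraint is $\sum_{i=1}^{j-1}|a_{ij}|^2<1$ for each $j=2,\dots,n$. Hence $\GS^{-1}(I)$ is the product over $j$ of the open unit balls in $\C^{\,j-1}\cong\R^{\,2(j-1)}$, i.e. a product of open balls of total real dimension $\sum_{j=2}^{n}2(j-1)=n(n-1)$, which is diffeomorphic to $\mathring{\mathbb B}^{n(n-1)}$.

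I expect no serious obstacle: the argument is a routine transcription of the real case. The only points requiring a moment's care are that the diagonal entries produced by Hermitian Gram--Schmidt are genuine positive reals (so that the base of the splitting is $U(n)$ and the $QR$-factorization is unique), and that each strictly-upper-triangular parameter now contributes two real dimensions rather than one, which is exactly what turns the exponent $\tfrac{n(n-1)}2$ of the real statement into $n(n-1)$ in the complex setting.
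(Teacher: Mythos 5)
Your proof is correct and is precisely the argument the paper intends: the paper gives no separate proof here, saying only ``As before we have,'' i.e.\ it relies on repeating the real Gram--Schmidt argument of Proposition \ref{Wzero} with $SO(n)$ replaced by $U(n)$. Your transcription, including the check that the Hermitian Gram--Schmidt $R$-factor has positive real diagonal and that each strictly upper-triangular entry contributes two real dimensions (giving the exponent $n(n-1)$), fills in exactly the details the paper leaves implicit.
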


Here instead of $\Z_2^n$ the torus $(S^1)^n$ acts via rotating the vectors. (In the real case we could only change the sign.) As before this action commutes with the Gram-Schmidt orthogonalization process. We obtain

\begin{proposition}\label{opiszesp}
$$W(n)^\C_{\not=0}/(S^1)^n\simeq U(n)/(S^1)^n\times
\mathring{\mathbb B}^{n(n-1)}\,.$$
\end{proposition}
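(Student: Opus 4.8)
The plan is to run, over $\C$, the same argument that produced the real decompositions at the end of Section~\ref{section3}, being careful about the one place where the complex case genuinely differs: how the torus acts on the ball factor.

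First I would make the splitting of the preceding Proposition equivariant. That Proposition says the map $B\mapsto(\GS(B),\GS(B)^{-1}B)$ is a diffeomorphism of $W(n)^\C_{\not=0}$ onto $U(n)\times\GS^{-1}(I)$, and that $\GS^{-1}(I)$ — the upper triangular matrices with unit columns and positive real diagonal — is identified, via its strictly upper entries $a_{ij}\in\C$ ($1\le i<j\le n$), with $\{\,\sum_{i=1}^{j-1}|a_{ij}|^2<1\text{ for }j=2,\dots,n\,\}\subset\C^{n(n-1)/2}$, a product of complex balls, whence $\GS^{-1}(I)\cong\mathring{\mathbb B}^{n(n-1)}$. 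Since multiplying the $k$-th vector $v_k$ by $\lambda_k\in S^1$ changes none of the subspaces $span(v_1,\dots,v_\ell)$ and multiplies exactly the $k$-th Gram--Schmidt vector by $\lambda_k$, one has $\GS(BA)=\GS(B)A$ for every diagonal $A\in(S^1)^n$ — this is the observation stated just before the Proposition. Transporting the right $(S^1)^n$-action through the diffeomorphism, $A$ sends $(M,T)$ to $(MA,\,A^{-1}TA)$; in the coordinates $a_{ij}$ the second factor is the linear map $a_{ij}\mapsto\overline{\lambda_i}\lambda_j\,a_{ij}$, which preserves $\GS^{-1}(I)$ and fixes its center $T=I$.

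Next I would pass to the quotient. The action of $(S^1)^n$ on $U(n)$ by right multiplication is free, so $U(n)\to U(n)/(S^1)^n$ is a principal $(S^1)^n$-bundle and
$$W(n)^\C_{\not=0}/(S^1)^n\;\cong\;\bigl(U(n)\times\GS^{-1}(I)\bigr)/(S^1)^n\;=\;U(n)\times_{(S^1)^n}\GS^{-1}(I)\,,$$
the bundle over $U(n)/(S^1)^n$ with fibre $\GS^{-1}(I)\cong\mathring{\mathbb B}^{n(n-1)}$ associated to the conjugation action above. Because that fibre action is linear and fixes the origin, the radial contraction of the ball onto its center commutes with the structure group, hence globalizes to a deformation retraction of the total space onto its zero section, a copy of $U(n)/(S^1)^n$. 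Therefore $W(n)^\C_{\not=0}/(S^1)^n$ has the homotopy type of $U(n)/(S^1)^n$, and a fortiori of $U(n)/(S^1)^n\times\mathring{\mathbb B}^{n(n-1)}$, which is the assertion.

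The only real obstacle is the twist. The induced $(S^1)^n$-action on $U(n)\times\GS^{-1}(I)$ is not a product action — it acts on the ball factor by conjugation — so the quotient is genuinely a nontrivial ball bundle over the flag manifold $U(n)/(S^1)^n$ (its underlying vector bundle is $\bigoplus_{i<j}\mathcal L_i^{-1}\otimes\mathcal L_j$, a sum of tensor products of tautological line bundles, with nonvanishing Chern classes), not literally a product. So the identification with $U(n)/(S^1)^n\times\mathring{\mathbb B}^{n(n-1)}$ has to be read at the level of homotopy type, exactly as for the analogous real decompositions in Section~\ref{section3}; the zero-section retraction of the previous paragraph is what legitimizes this, and writing it out carefully is the step I would take most seriously.
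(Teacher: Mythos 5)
Your argument is correct and follows the same route the paper sketches for this proposition: complex Gram--Schmidt splits $W(n)^\C_{\not=0}$ as $U(n)\times\GS^{-1}(I)$ with $\GS^{-1}(I)$ a product of complex balls of total real dimension $n(n-1)$, the identity $\GS(BA)=\GS(B)A$ holds for diagonal unitary $A$ because rotating $v_k$ by $\lambda_k$ leaves all the spans unchanged and rotates only $w_k$, and one then passes to the quotient. Your additional observation about the twist is a genuine and worthwhile refinement rather than a detour: under the splitting $B\mapsto(\GS(B),\GS(B)^{-1}B)$ the torus acts on the ball factor by $a_{ij}\mapsto\overline{\lambda_i}\lambda_j\,a_{ij}$, so the quotient is the associated ball bundle $U(n)\times_{(S^1)^n}\GS^{-1}(I)$ and not a literal product; since the underlying vector bundle $\bigoplus_{i<j}\mathcal L_i^{-1}\otimes\mathcal L_j$ has nonzero Euler class, the total space is not even homeomorphic to $U(n)/(S^1)^n\times\mathring{\mathbb B}^{n(n-1)}$. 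The paper writes the statement with a bare $\simeq$ and does not address this point; as you note, what actually holds (and what the paper uses afterwards, for cohomology and category) is the homotopy equivalence furnished by the fiberwise radial retraction onto the zero section. In short, your proof is the paper's argument carried out carefully, together with the correct reading of what the asserted identification can mean.
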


The space $U(n)/(S^1)^n$ is the complex flag variety $Fl(n,\C)$. The integral cohomology of the flag variety is free and additively generated by the Schubert cycles:

\begin{theorem}\cite[Th. 20.3(b)]{Bor} The cohomology $H^*(Fl(n,\C);\Z)$ is an  algebra generated by elements from the second gradation
(Chern classes of the tautological line bundles). The rank is equal to $n!$.\end{theorem}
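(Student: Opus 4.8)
The plan is to run the complex counterpart of the inductive computation of $H^*(Fl(n,\R);\Z_2)$ sketched in the remark following Theorem \ref{flagiR}. The essential simplification in the complex case is that $Fl(n,\C)$ admits a cell decomposition into Schubert (Bruhat) cells, all of even real dimension and indexed by the permutations in $\Sigma_n$; consequently the arguments below will yield integral information and the cohomology will automatically be torsion-free. First I would set up the fibre bundle
$$Fl(n-1,\C)\hookrightarrow Fl(n,\C)\map{p}\C\P^{n-1}\,,$$
in which $p$ sends a complete flag $V_1\subset V_2\subset\dots\subset V_n=\C^n$ to the line $V_1$, so that the fibre over $V_1$ is the variety of complete flags in $\C^n/V_1\cong\C^{n-1}$. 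The base satisfies $H^*(\C\P^{n-1};\Z)=\Z[x_1]/(x_1^n)$, free of rank $n$, with $x_1=c_1(L_1)$ the first Chern class of the tautological line bundle $L_1=V_1$.

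Then I would carry out the induction on $n$, the case $n=1$ being a point. For the inductive step I would assume that $H^*(Fl(n-1,\C);\Z)$ is a free $\Z$-module of rank $(n-1)!$ generated as a ring by the Chern classes of the tautological line bundles on the fibre. The key point is that these fibrewise tautological bundles are precisely the restrictions of the globally defined tautological bundles $L_i=V_i/V_{i-1}$ (with $V_0=0$) on $Fl(n,\C)$. Hence a monomial $\Z$-basis of $H^*(Fl(n-1,\C);\Z)$ consists of restrictions of classes already defined on the total space, and these restrict to a basis on every fibre; since the fibre cohomology is free by the inductive hypothesis, the Leray--Hirsch theorem \cite[Ch. 17.1]{Hus} applies and exhibits $H^*(Fl(n,\C);\Z)$ as a free module over $H^*(\C\P^{n-1};\Z)$ on that basis. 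Therefore $H^*(Fl(n,\C);\Z)$ is a free $\Z$-module of rank $n\cdot(n-1)!=n!$, and it is generated as a ring by $x_1,x_2,\dots,x_n$, i.e. by first Chern classes of tautological line bundles, which closes the induction.

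If the finer ring-theoretic description were wanted, I would additionally note that the elementary symmetric functions $\sigma_1,\dots,\sigma_n$ of $x_1,\dots,x_n$ vanish, because $\bigoplus_{i=1}^n L_i$ is topologically the trivial bundle $\C^n$, giving a surjection $\Z[x_1,\dots,x_n]/(\sigma_1,\dots,\sigma_n)\twoheadrightarrow H^*(Fl(n,\C);\Z)$; comparing $\Z$-ranks — the source is also free of rank $n!$ — shows it is an isomorphism, recovering \cite[20.3(b)]{Bor}. The only step that really needs care is the identification of the restriction of each global bundle $L_i$ with the corresponding tautological bundle on the fibre $Fl(n-1,\C)$, together with the bookkeeping ensuring that the hypotheses of Leray--Hirsch over $\Z$ are met (for which the torsion-freeness supplied by the induction is exactly what is needed). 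I do not expect any substantive difficulty beyond this; the statement is classical, and one may alternatively just invoke Borel's computation directly.
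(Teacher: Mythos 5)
Your argument is correct. Note that the paper itself does not prove this statement at all: it simply cites Borel \cite[Th.\ 20.3(b)]{Bor}, so any comparison is really with the remark following Theorem~\ref{flagiR}, where the authors sketch exactly this Leray--Hirsch induction for the \emph{real} flag variety with $\Z_2$ coefficients. You have carried out the complex, integral analogue of that sketch, and all the essential points are in order: the fibration $Fl(n-1,\C)\hookrightarrow Fl(n,\C)\to\C\P^{n-1}$, the identification of the fibrewise tautological bundles with restrictions of the global $L_i=V_i/V_{i-1}$ (for $i\ge 2$; the restriction of $L_1$ to a fibre is trivial, which is harmless), the freeness of the fibre cohomology needed to apply Leray--Hirsch over $\Z$, and the rank count $n\cdot(n-1)!=n!$. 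The concluding identification with the coinvariant algebra is also sound: $\sigma_i(x_1,\dots,x_n)=0$ because $\bigoplus_i L_i$ is stably (indeed topologically) isomorphic to the trivial bundle $V_n=\C^n$, and the surjection $\Z[x_1,\dots,x_n]/(\sigma_1,\dots,\sigma_n)\twoheadrightarrow H^*(Fl(n,\C);\Z)$ between free $\Z$-modules of the same rank $n!$ (the source has the Artin monomial basis $x_1^{a_1}\cdots x_n^{a_n}$, $a_i\le n-i$) must be an isomorphism. What your route buys over the paper's is self-containedness and the extra information that the integral cohomology is torsion-free and concentrated in even degrees, which the paper uses implicitly when it applies Proposition~\ref{bredon} and counts Betti numbers; what the citation buys is brevity.
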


Combining the result above, Proposition \ref{opiszesp} with Theorem \ref{bredon} we obtain

\begin{corollary} The cohomology $H^*(W(n)^\C_{\not=0}/((S^1)^n\rtimes\Sigma_n);\Q)$ is an  algebra generated by elements from the second gradation. The dimension (i.e. the sum of Betti numbers) is equal to $n!$.
\end{corollary}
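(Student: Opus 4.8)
The plan is to descend from $W(n)^\C_{\not=0}$ to the flag variety in two stages, dividing first by the torus and then by the symmetric group. Since a quotient by a semidirect product can be formed in stages, I would write
$$W(n)^\C_{\not=0}/((S^1)^n\rtimes\Sigma_n)=\bigl(W(n)^\C_{\not=0}/(S^1)^n\bigr)\big/\Sigma_n\,,$$
and use Proposition~\ref{opiszesp} to identify the inner quotient with $Fl(n,\C)\times\mathring{\mathbb B}^{n(n-1)}$. The first thing to check is that this product description is compatible, up to $\Sigma_n$-equivariant homotopy, with the residual $\Sigma_n$-action: viewing $W(n)^\C_{\not=0}$ inside $GL_n(\C)$, it deformation retracts onto $U(n)$ in a way that commutes with the right action of $(S^1)^n\rtimes\Sigma_n$ (send $A$ along the path $A(A^*A)^{-t/2}$ and renormalize the columns), so after dividing by $(S^1)^n$ the open ball factor disappears $\Sigma_n$-equivariantly, leaving $Fl(n,\C)=U(n)/(S^1)^n$ with the $\Sigma_n$-action that permutes the tautological lines. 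Thus
$$H^*\bigl(W(n)^\C_{\not=0}/((S^1)^n\rtimes\Sigma_n);\Q\bigr)\cong H^*\bigl(Fl(n,\C)/\Sigma_n;\Q\bigr)\,.$$

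Next I would apply Proposition~\ref{bredon} to the finite group $\Sigma_n$ acting on $Fl(n,\C)$, obtaining
$$H^*\bigl(Fl(n,\C)/\Sigma_n;\Q\bigr)\cong H^*(Fl(n,\C);\Q)^{\Sigma_n}\,.$$
The statement then follows provided the $\Sigma_n$-action on the rational cohomology of the complex flag variety is trivial: in that case the invariant subring equals $H^*(Fl(n,\C);\Q)$, and the structure of $H^*(Fl(n,\C);\Z)$ quoted from \cite[Th.~20.3(b)]{Bor} --- free of total rank $n!$, generated as a ring by the degree-two Chern classes of the tautological line bundles --- passes to the quotient unchanged, which is exactly the assertion of the corollary.

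The step I expect to be the real obstacle is precisely this triviality. Geometrically $\Sigma_n$ permutes the tautological line bundles and hence, a priori, permutes their first Chern classes, so the content of the claim is that this permutation nevertheless acts as the identity on rational cohomology (equivalently, that the $\Sigma_n$-invariant part still has the full rank $n!$). It is the analogue of an easy observation in the real case --- that $G_0$ acts trivially on $H^*(SO(n);\Q)$ because $G_0$ sits inside the connected group $SO(n)$ --- but here the situation is more delicate, since $\Sigma_n$ acts on the quotient $U(n)/(S^1)^n$ rather than on $U(n)$, so the path-connectedness of $U(n)$ does not by itself make the induced self-maps of $Fl(n,\C)$ homotopic to the identity. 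My approach would be to produce these homotopies explicitly, building a $\Sigma_n$-equivariant homotopy to the identity for each transposition; this is the part of the argument I would expect to require real care, and once it is in hand Proposition~\ref{bredon} together with the Borel computation completes the proof.
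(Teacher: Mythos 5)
Your route is the one the paper itself takes: quotient in two stages, use Proposition~\ref{opiszesp} to replace the inner quotient $W(n)^\C_{\not=0}/(S^1)^n$ by $Fl(n,\C)$ up to ($\Sigma_n$-equivariant) homotopy, apply Proposition~\ref{bredon} to the residual finite group $\Sigma_n$, and quote Borel's computation; the paper's proof is exactly this one-line combination, with no further argument. You have, however, correctly isolated the step on which everything hinges --- the triviality of the $\Sigma_n$-action on $H^*(Fl(n,\C);\Q)$ --- and you leave it unproved, promising only to ``produce these homotopies explicitly'' later. That is a genuine gap, and it cannot be filled, because the claim is false. The residual $\Sigma_n$-action on $U(n)/(S^1)^n$ is the right Weyl-group action on $G/T$, which on rational cohomology is the permutation action on the coinvariant algebra $\Q[x_1,\dots,x_n]/(\sigma_1,\dots,\sigma_n)$. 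By Chevalley's theorem this algebra is the \emph{regular} representation of $\Sigma_n$, so its invariant part is one-dimensional; already $H^2(Fl(n,\C);\Q)$ is the $(n-1)$-dimensional standard representation and has no invariants at all. Concretely, for $n=2$ the transposition acts on $Fl(2,\C)=\C\P^1$ by $L\mapsto L^{\perp}$, the antipodal map of $S^2$, which is not homotopic to the identity and acts by $-1$ on $H^2$; the quotient is $\R\P^2$, with one-dimensional rational cohomology rather than $2!=2$.

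So the equivariant homotopies you hope to build for each transposition do not exist, and running your (and the paper's) argument honestly through Proposition~\ref{bredon} yields $\dim H^*\bigl(W(n)^\C_{\not=0}/((S^1)^n\rtimes\Sigma_n);\Q\bigr)=1$, not $n!$. This is not a defect of your strategy alone: the paper silently assumes the same triviality, presumably by analogy with the real case --- but there the finite group $G_0$ acts on $W(n)_{>0}\sim SO(n)$ itself by right translations inside the connected group $SO(n)$, whereas here $\Sigma_n$ acts on the quotient $U(n)/(S^1)^n$, and the induced self-maps of $Fl(n,\C)$ are genuinely not homotopic to the identity. Your instinct that this is ``the real obstacle'' and ``more delicate'' than the real case was exactly right; the honest conclusion is that the obstacle is insurmountable and the corollary as stated (and hence the $n!+2$ Morse bound derived from it) requires either dropping the $\Sigma_n$-quotient or a different counting argument.
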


\begin{corollary} Let $$g:W(n)^\C/((S^1)^n\rtimes\Sigma_n)\to \R$$ be a   function such that $g^{-1}(0)=W(n)^\C_{0}/((S^1)^n\rtimes\Sigma_n)$ and $g$ is a Morse function on $W(n)_{\not=0}/G_0$. Then $g$ has at least $n!+2$ critical points.
\end{corollary}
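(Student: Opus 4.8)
The plan is to imitate the proofs of Corollary \ref{morseoszac} and of the $2^{[n/2]}+4$ estimate, replacing $SO(n)$ by $U(n)$ and $G_0$ by $G=(S^1)^n\rtimes\Sigma_n$; note that in the complex case every element of $G$ has $|\det|=1$, so $G$ coincides with its determinant-one subgroup and preserves the connected manifold $W(n)^\C_{\not=0}$. Write $M=W(n)^\C_{\not=0}/G$: this is the open dense manifold locus of the compact quotient $W(n)^\C/G=(S^{2n-1})^n/G$, it equals $\{g>0\}$, and its complement is $g^{-1}(0)$. The strategy has two steps: (i) extract at least $n!$ critical points from the Morse inequalities for $-g$, using the known size of $H^*(M;\Q)$; and (ii) squeeze out two further critical points from the non-commutativity of $\pi_1(M)$.

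For step (i) I would proceed exactly as in the proof of Corollary \ref{morseoszac}. Assume $g$ has only finitely many critical points (otherwise the conclusion is trivial). Since the ambient quotient $W(n)^\C/G$ is compact, for $\eps>0$ below the least positive critical value the superlevel set $\{g\ge\eps\}$ is compact, is contained in $M$, and the inclusion $\{g\ge\eps\}\hookrightarrow M$ is a homotopy equivalence. Applying Morse theory to $-g$ and sweeping the superlevel sets from the maximum of $g$ down to $\eps$, each critical point of $g$ attaches a cell to a CW model of $M$, so $g$ has at least $\dim H^*(M;\Q)$ critical points, which by the preceding corollary equals $n!$.

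For step (ii) I would show $\pi_1(M)\cong\Sigma_n$. By Proposition \ref{opiszesp} the intermediate quotient $W(n)^\C_{\not=0}/(S^1)^n$ deformation retracts onto the flag variety $Fl(n,\C)$, which is simply connected (it has a cell decomposition by even-dimensional Schubert cells). The residual $\Sigma_n$-action on $W(n)^\C_{\not=0}/(S^1)^n$, permutation of the ordered $n$-tuple of lines, is free: linear independence forces the lines to be pairwise distinct, so no non-trivial permutation fixes the tuple. Hence $W(n)^\C_{\not=0}/(S^1)^n\to M$ is a regular $n!$-fold covering with simply connected total space, and $\pi_1(M)\cong\Sigma_n$. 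For $n\ge3$ the group $\Sigma_n$ is non-cyclic, hence not generated by a single element, so the CW model of $M$ built above must contain at least two $1$-cells; in the terminology of the previous corollary, $g$ has at least two critical points of index $1$. But $H_1(M;\Q)=\Sigma_n^{\mathrm{ab}}\otimes\Q=\Z_2\otimes\Q=0$, so the first Betti number of $M$ vanishes and these two critical points are not accounted for in step (i). Adding ``at least $2$ critical points of index $1$'' to ``at least $b_j(M)$ critical points of index $j$'' for every $j\neq1$ yields at least $\sum_j b_j(M)+2=n!+2$ critical points, as claimed.

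The genuinely delicate step is the identification $\pi_1(M)\cong\Sigma_n$. The homotopy equivalence of Proposition \ref{opiszesp} is not $\Sigma_n$-equivariant, since Gram-Schmidt orthogonalization does not commute with permuting the vectors, so one cannot simply read off the homotopy type of $M$; the way around this is the argument above through the covering $W(n)^\C_{\not=0}/(S^1)^n\to M$, using only simple connectivity of $Fl(n,\C)$ and freeness of the $\Sigma_n$-action. The remaining ingredients — the compactness reduction legitimising Morse theory on the non-compact $M$, and the passage from ``$\pi_1(M)$ needs two generators'' to ``the CW model has at least two $1$-cells'' — are the same as in the real case treated just above.
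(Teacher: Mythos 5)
Your proof is correct and follows essentially the same route as the paper, which itself only remarks that ``the proof is the same as in the real case except the argument involving the fundamental group'': Morse inequalities applied to $-g$ on $M\simeq\{g\geq\eps\}$ give $n!$ critical points from $\dim H^*(M;\Q)=n!$, and the fact that $\pi_1(M)$ is not cyclic forces two index-one critical points that are invisible to the (vanishing) first Betti number. Your explicit identification $\pi_1(M)\cong\Sigma_n$, via the $n!$-fold covering by the simply connected $W(n)^\C_{\not=0}/(S^1)^n$ and the freeness of the residual $\Sigma_n$-action on tuples of distinct lines, is a correct filling-in of a detail the paper leaves implicit (and is the natural complex analogue of the short exact sequence the paper uses for $\pi_1(SO(n)/G_0)$ in the real case).
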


The proof is the same as in the real case except the argument involving the fundamental group. We know that there are at least two critical points of index one, but we do not control the number of points of index two.
\begin{remark}\rm We do not have to assume that $D$ is convex. It is enough to assume that it is star-shaped and centrally symmetric.\end{remark}

\section{Estimate based on Lusternik--Schnirelmann category}
\label{section5}

If a function $g$ is not of Morse type, then we can apply Lusternik--Schnirelmann category to estimate the number of bifurcation points (cf. \cite{LS} and also a modern monograph \cite{CLOT}). 

\begin{definition} Let $X$ be a topological space and $Y\subset X$ its closed subspace.
Then the Lusternik--Schnirelmann category $\cat_X(Y)$ denots the smallest cardinality of
covering of $Y$ by open sets which are contractible in $X$.
\end{definition}

We need the following version of the Lusternik--Schnirelman theorem which is valid for general continuous maps.

\begin{theorem}\label{LSmain} Let $M$ be a path connected metric space which is locally contractible. Let $f:M\to \R$ be a continuous proper function which is bounded from below. For each $a\in \R$ the number of bifurcation points with $f(x)\leq a$ is not smaller than $\cat_M(M_{\leq a})$.\end{theorem}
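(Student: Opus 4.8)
The plan is to reproduce the classical Lusternik--Schnirelmann minimax argument, replacing the gradient flow of a smooth function by deformations built by hand from the product neighbourhoods furnished by Definition~\ref{bifur}. First I would record the set-up. Since $f$ is proper and bounded below, every sublevel set $M_{\le t}=f^{-1}((-\infty,t])$ is compact, and local contractibility then gives a finite cover of $M_{\le t}$ by open subsets of $M$ contractible in $M$, so $\cat_M(M_{\le t})<\infty$. If $M_{\le a}$ contains infinitely many bifurcation points the assertion is vacuous, so I assume there are finitely many; then there are finitely many bifurcation values $c_1<\dots<c_m\le a$, and since every point at which $f$ attains a local minimum is a bifurcation point while $\inf_M f$ is attained, $c_1=\inf_M f$ and $M_{\le t}=\emptyset$ for $t<c_1$. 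I would use three formal properties of $\cat_M(\cdot)$: monotonicity; subadditivity, $\cat_M(A\cup B)\le\cat_M(A)+\cat_M(B)$; and deformation invariance, namely that a homotopy $h\colon A\times[0,1]\to M$ with $h_0$ the inclusion and $h_1(A)\subseteq B$ forces $\cat_M(A)\le\cat_M(B)$. For a general locally contractible metric space one must be slightly careful about the interplay between relatively open covers of a subset and open covers of $M$; in the manifolds to which the theorem is applied this is automatic.

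The next step is a \emph{first deformation lemma}: if $[\alpha,\beta]$ contains no bifurcation value then $\cat_M(M_{\le\alpha})=\cat_M(M_{\le\beta})$. Under this hypothesis every point of the compact slab $f^{-1}([\alpha,\beta])$ is topologically regular, hence has a neighbourhood on which $f$ is the projection to an open subinterval. Covering the slab by finitely many such product charts $U_j\simeq S_j\times(\alpha_j,\beta_j)$, taking a subordinate partition of unity and using the charts to push points a small amount in the direction of decreasing $f$ — in a bounded number of rounds, with the standard adjustments that keep the motion inside the charts — produces a homotopy of $M$ which fixes everything outside a neighbourhood of the slab, does not increase $f$, and carries $M_{\le\beta}$ into $M_{\le\alpha}$; with monotonicity this gives equality of the categories. (Equivalently, one may quote that a proper topological submersion over the $1$-manifold $[\alpha,\beta]$ is a trivial bundle.)

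The heart of the proof is a \emph{second deformation lemma}: for a bifurcation value $c\le a$ with finite bifurcation set $B_c=\{p_1,\dots,p_\ell\}$ there exist $\eps>0$ and an open set $W=W_1\sqcup\dots\sqcup W_\ell$ with $p_i\in W_i$ and each $W_i$ contractible in $M$, such that $M_{\le c+\eps}$ deforms in $M$ into $M_{\le c-\eps}\cup W$; hence $\cat_M(M_{\le c+\eps})\le\cat_M(M_{\le c-\eps})+\ell$. I would fix $\eps_0>0$ so that $c$ is the only bifurcation value in $[c-\eps_0,c+\eps_0]$, choose disjoint contractible-in-$M$ neighbourhoods $W_i\ni p_i$ and nested open sets $B_c\subseteq V_1\Subset V_2$ with $\overline{V_2}\subseteq W\cap f^{-1}((c-\eps_0,c+\eps_0))$, and observe that $K:=f^{-1}([c-\eps_0,c+\eps_0])\setminus V_1$ is compact and, as $f^{-1}(c)\cap K$ avoids $B_c$, consists entirely of topologically regular points. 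Covering $K$ by product charts, taking a subordinate partition of unity damped by a function equal to $1$ off $V_2$ and to $0$ on $V_1$, and performing one small round of downward pushing of size $\delta$ (with $\delta$ small relative to the charts and then $\eps$ small relative to $\delta$) yields a homotopy $H_t$ of $M$ which fixes $B_c$ and everything far from the slab, does not increase $f$, drives every point of $M_{\le c+\eps}$ lying more than a prescribed small distance from $\overline{V_2}$ down to level $\le c-\eps$, and keeps the remaining points of $M_{\le c+\eps}$ inside $W$; thus $H_1(M_{\le c+\eps})\subseteq M_{\le c-\eps}\cup W$, and subadditivity gives the inequality. The hard part of the whole argument lies precisely here: assembling the patched partition-of-unity map $H_t$ continuously, keeping the pushed points inside their charts with a uniform positive downward rate on $K$, and checking that exactly the points which linger in the damped shell around $B_c$ fail to be pushed below level $c-\eps$ and that these stay inside $W$.

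Finally I would telescope, using a single $\eps>0$ smaller than all the $\eps$'s above and than half the minimal gap between consecutive $c_i$. Descending from $M_{\le a}$ through $c_m,c_{m-1},\dots,c_1=\inf_M f$, the first lemma keeps $\cat_M$ constant across each gap between consecutive bifurcation values, and the second lemma raises an upper bound for $\cat_M$ by $|B_{c_i}|$ across each $c_i$; since the descent terminates at $\cat_M(M_{\le t})=\cat_M(\emptyset)=0$ for $t<c_1$, we get
$$\cat_M(M_{\le a})\ \le\ \sum_{i=1}^{m}|B_{c_i}|\ =\ \#\{x\in M:\ x\text{ is a bifurcation point and }f(x)\le a\},$$
which is the assertion of the theorem.
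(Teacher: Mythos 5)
Your proposal is a correct strategy, but it follows the ``two deformation lemmas plus telescoping'' route, whereas the paper argues by minimax. The paper defines $\lambda_i=\inf\{\sup_X f\}$ over compact sets $X$ with $\cat_M(X)\geq i$, shows the infimum is attained (compactness of the hyperspace of closed subsets in the Hausdorff metric, upper semicontinuity of $\cat_M$), and then only ever needs a \emph{single} non-increasing homeomorphism $\Phi$, built from finitely many product-chart pushes $\phi(s,t)=(s,\,t-\alpha(s)\beta(t))$, that strictly lowers $f$ on a level set (respectively on the level set minus small contractible neighbourhoods of the bifurcation points). Applied to the minimizing compact set $X$, this immediately shows each $\lambda_i$ is a bifurcation value and that $\lambda_i<\lambda_{i+1}$ when the bifurcation set is discrete. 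Your route requires the strictly stronger statements that the \emph{entire} sublevel set $M_{\leq\beta}$ can be deformed into $M_{\leq\alpha}$ (resp.\ into $M_{\leq c-\eps}\cup W$), which is where all the analytic work sits; the paper's choice of working with one well-chosen compact set avoids exactly this. What your approach buys is the more standard and more quotable subadditivity inequality $\cat_M(M_{\leq c+\eps})\leq\cat_M(M_{\leq c-\eps})+|B_c|$; what it costs is the delicate assembly you yourself flag as ``the hard part.''

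Two points there deserve explicit care if you write this out. First, in the first deformation lemma a single round of pushing only lowers the top level set strictly, not the whole slab uniformly; you must iterate, using properness to extract, for each $c\in[\alpha,\beta]$, an $\eta_c>0$ with $\Phi_c(M_{\leq c+\eta_c})\subseteq M_{\leq c-\eta_c}$, and then chain finitely many such maps down the interval. Second, in the second deformation lemma it is not enough that the lingering points \emph{start} in a neighbourhood of $B_c$: the pushes move points, so you must arrange that every chart whose closure meets $\overline{V_2}$ is itself contained in $W$, and that the full-strength regions of the bump functions still cover the slab outside $V_1$ (otherwise a point sitting in the damped fringe of every chart it visits could fail to descend below $c-\eps$ without ever lying in $W$). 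Both issues are fixable by the nested-neighbourhood and Lebesgue-number arguments you sketch, so I regard the proposal as sound, just substantially heavier than the paper's argument.
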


In the  presentations of the Lusternik--Schnirelmann theory which are available in the li\-terature (see also \cite{Fox, J, DFN}) $M$ is assumed to be a manifold and $f$ a smooth function. In the Appendix we give a proof of Theorem \ref{LSmain} as stated with only topological assumptions.

\medskip

 We will apply the above theorem for $M=W(n)_{>0}/G_0$ and the function
for $$f=-\log(g(v_1,v_2,\dots, v_n))_{|W(n)_{>0}}$$ (see the formula (\ref{gtilde})).
The  functions $f$ and $g$ have the same bifurcation points but $f$ is proper and bounded from below as assumed in Theorem \ref{LSmain}. The manifold
$M$ retracts to $SO(n)/G_0$, so $\cat(M)=\cat_M(SO(n)/G_0)$. The space $SO(n)/G_0$ is compact, thus it is contained in some $M_{\leq a}$. Hence there are at least $\cat(M)$ bifurcation points  with values $\leq a$.

We will use the cup-length to estimate the $\cat(W(n)_{>0}/\Z_2^n)$.

\begin{definition} Let $R$ be a ring. The cup-length
$$\ell_R(M)$$ is the length of the longest sequence of  $\alpha_1,\alpha_2,\dots,\alpha_\ell$ of cohomology classes $\alpha_j\in H^*(M,\R)$ of positive degree such that
$$\alpha_1\cup\alpha_2\cup\dots\cup\alpha_\ell\not=0\,.$$\end{definition}

Cup-length is a basic lower bound for the category of a set,
cf. \cite[(1.3)]{J} or \cite[Theorem 1.15]{CLOT} for related historical remarks and further references.

\begin{theorem} Let $M$ be a topological space. Then for any ring $R$
$$\cat(M)\geq \ell_R(M)+1\,.$$
\end{theorem}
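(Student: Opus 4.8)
The plan is to prove the classical Lusternik--Schnirelmann inequality $\cat(M)\geq \ell_R(M)+1$ by showing that an open set $U\subset M$ which is contractible in $M$ kills all cup products of positive-degree classes, i.e. the restriction homomorphism $H^{>0}(M;R)\to H^{>0}(U;R)$ is zero, and then combining $\ell+1$ such open sets with the Mayer--Vietoris (or relative cohomology) formalism to produce a nonzero cup product of length $\ell+1$, contradicting the definition of $\ell_R(M)$. More precisely, suppose $M$ is covered by open sets $U_0,U_1,\dots,U_{k-1}$, each contractible in $M$, with $k=\cat(M)$. For each $j$ the inclusion $\iota_j:U_j\hookrightarrow M$ factors up to homotopy through a point, hence $\iota_j^*:H^{m}(M;R)\to H^m(U_j;R)$ vanishes for every $m>0$. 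The key step is then to lift this statement to relative cohomology: for a positive-degree class $\alpha_j\in H^{d_j}(M;R)$, the vanishing of $\iota_j^*\alpha_j$ together with the long exact sequence of the pair $(M,U_j)$ produces a class $\bar\alpha_j\in H^{d_j}(M,U_j;R)$ mapping to $\alpha_j$ under $H^*(M,U_j;R)\to H^*(M;R)$.

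Next I would use the relative cup product $H^*(M,A;R)\otimes H^*(M,B;R)\to H^*(M,A\cup B;R)$. Given positive-degree classes $\alpha_1,\dots,\alpha_\ell\in H^{>0}(M;R)$ with $\alpha_1\cup\cdots\cup\alpha_\ell\neq 0$, assume for contradiction that $\ell\geq k=\cat(M)$; take the first $k$ of them (if $\ell>k$ the argument only needs $k$ factors and the tail is absorbed). Lift $\alpha_j$ to $\bar\alpha_j\in H^*(M,U_{j-1};R)$ for $j=1,\dots,k$ as above, form the relative product $\bar\alpha_1\cup\cdots\cup\bar\alpha_k\in H^*(M,U_0\cup U_1\cup\cdots\cup U_{k-1};R)=H^*(M,M;R)=0$, and observe that under the natural map to $H^*(M;R)$ this relative product maps to $\alpha_1\cup\cdots\cup\alpha_k$. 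Hence $\alpha_1\cup\cdots\cup\alpha_k=0$, and a fortiori $\alpha_1\cup\cdots\cup\alpha_\ell=0$ (multiply by the remaining factors), contradicting the assumed nonvanishing. Therefore any nonvanishing product has length $\ell\leq k-1$, i.e. $\ell_R(M)\leq\cat(M)-1$, which is the claim.

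The main obstacle — really the only subtle point — is the behaviour of the relative cup product and the compatibility of the lifts: one must check that the relative product $\bar\alpha_1\cup\cdots\cup\bar\alpha_k$ genuinely lands in $H^*(M,\bigcup_j U_j;R)$ and restricts correctly to the absolute product in $H^*(M;R)$, which is a standard but slightly fiddly diagram chase with the long exact sequences of the pairs. One also needs the open sets $U_j$ to be genuine excisive/cofibration-like pairs for the relative cup product and the identification $\bigcup_j U_j=M$ to be usable; for $M$ a manifold (as in our application $M=W(n)_{>0}/G_0$) or more generally an ANR this is automatic, and since the theorem is only invoked for such $M$ no extra hypotheses are needed. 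I would present the argument citing \cite[Theorem 1.15]{CLOT} for the general statement and the historical references in \cite{J}, and give the diagram chase only in the degree of detail needed to make the relative-product manipulation unambiguous.
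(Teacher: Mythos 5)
Your argument is correct: it is the classical proof via relative cup products (classes of positive degree lift to $H^*(M,U_j;R)$ because each $U_j$ is contractible in $M$, and the $k$-fold relative product lands in $H^*(M,M;R)=0$). The paper itself gives no proof of this theorem --- it simply cites \cite[(1.3)]{J} and \cite[Theorem 1.15]{CLOT} --- and your argument is exactly the standard one found in those references; your worry about excisiveness is moot since the sets in the cover are open, so the relative cup product and the identification of $H^*(M,\bigcup_j U_j;R)$ are unproblematic for an arbitrary topological space.
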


The real flag variety is of dimension $\frac{n(n-1)}2$. Its fundamental class in $H^{\frac{n(n-1)}2}(Fl(n,\R))$ is a product of classes of gradation one by Theorem \ref{flagiR}. Thus $$\ell_{\Z_2}(Fl(n,\R))=\tfrac{n(n-1)}2\quad\text{and}\quad \cat(Fl(n,\R))\geq\tfrac{n(n-1)}2+1.$$
In fact $\cat(Fl(n,\R))$ cannot exceed its dimension, \cite[Prop. 2.1]{J}. Thus
$$\cat(Fl(n,\R))=\tfrac{n(n-1)}2+1.$$

So far we have computed the category of the space $W(n)_{>0}/\Z_2^n\stackrel{htp}\sim Fl(n,\R)$, but we are interested in a quotient of that manifold.
The category of the quotient space is always not smaller than the original one under some general topological assumptions (e.g. that the space is locally contractible).

\begin{theorem}\label{26}\cite[ Th. 21.1]{Fox} Let $M_1\to M_2$ be a covering map of topological spaces. Then $\cat(M_2)\geq \cat(M_1)$.\end{theorem}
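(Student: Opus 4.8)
The plan is to reduce the statement to a single lemma: if $p\colon M_1\to M_2$ is the given covering and $U\subseteq M_2$ is an open set which is contractible in $M_2$, then $p^{-1}(U)$ is an open set which is contractible in $M_1$. Granting this, take an open cover $U_1,\dots,U_k$ of $M_2$ by sets contractible in $M_2$ with $k=\cat(M_2)$; then $p^{-1}(U_1),\dots,p^{-1}(U_k)$ is an open cover of $M_1$ by sets contractible in $M_1$, whence $\cat(M_1)\le k=\cat(M_2)$, which is the assertion.

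To prove the lemma, I would choose a contraction $H\colon U\times[0,1]\to M_2$ with $H_0$ the inclusion $U\hookrightarrow M_2$ and $H_1$ a constant map onto some point $x\in M_2$. Writing $\bar p=p|_{p^{-1}(U)}$ and letting $\iota\colon p^{-1}(U)\hookrightarrow M_1$ be the inclusion, the homotopy $\bar H=H\circ(\bar p\times\mathrm{id})\colon p^{-1}(U)\times[0,1]\to M_2$ satisfies $\bar H_0=p\circ\iota$, so $\iota$ is a lift of $\bar H_0$ through $p$. Since covering projections have the homotopy lifting property with respect to arbitrary spaces, $\bar H$ lifts to $\widetilde H\colon p^{-1}(U)\times[0,1]\to M_1$ with $\widetilde H_0=\iota$ and $p\circ\widetilde H=\bar H$. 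As $p\circ\widetilde H_1$ is the constant map onto $x$, the map $\widetilde H_1$ takes values in the fibre $p^{-1}(x)$, which is a discrete subset of $M_1$.

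It then remains to homotope $\widetilde H_1$ the rest of the way to an honest constant map, and this is the only genuine point. Being a covering space of the locally contractible space $M_2$, the space $M_1$ is locally path-connected, so the path-components of $p^{-1}(U)$ are open; since $\widetilde H_1$ has discrete image it is constant on each such component $V$, say with value $y_V$. I would then fix $y_0\in M_1$ and, for every component $V$, a path $\gamma_V$ in $M_1$ from $y_V$ to $y_0$ — here one uses that $M_1$ is path-connected, which holds in the situation of interest, where $M_1$ and $M_2$ are connected. The assignments $(v,t)\mapsto\gamma_V(t)$ for $v\in V$ patch together over the open cover of $p^{-1}(U)$ by its path-components into a homotopy from $\widetilde H_1$ to the constant map $y_0$; concatenating this homotopy with $\widetilde H$ shows that $\iota$ is null-homotopic, i.e. $p^{-1}(U)$ is contractible in $M_1$. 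This proves the lemma, and with it the theorem.

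The main obstacle is precisely the disconnectedness of $p^{-1}(U)$: even when $U$ is connected its preimage is in general a disjoint union of sheets (twisted together only over $M_2$), so the lifted contraction collapses each sheet separately rather than all of $p^{-1}(U)$ to a single point. Repairing this requires local path-connectedness of $M_1$ (so that the sheets are open and the patched homotopy is continuous) together with path-connectedness of $M_1$ (to slide the various collapsed points to a common basepoint); both hypotheses are harmless in the intended application. One should also be mindful that the homotopy lifting property is being invoked for the arbitrary space $p^{-1}(U)$ rather than merely for cubes, which is a standard feature of covering maps.
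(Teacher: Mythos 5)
The paper gives no proof of this statement at all; it is quoted from Fox's 1941 paper, so there is no in-paper argument to compare against. Your proof is the standard modern argument for this classical fact and it is correct: pull back a categorical cover of $M_2$, lift the contraction of each member $U$ through the covering via the homotopy lifting property (which covering maps do enjoy for arbitrary domains), note that the time-one map of the lift lands in a discrete fibre and is hence constant on each path-component of $p^{-1}(U)$, and finally slide these values to a common basepoint along paths, patching over the (open, disjoint) path-components. You correctly isolate the one delicate point: the statement as printed, for bare ``topological spaces,'' is actually false without connectivity hypotheses --- a two-point discrete space covers a point but has category $2$, and more generally the trivial double cover $M_2\sqcup M_2\to M_2$ violates the inequality --- so one must assume $M_1$ path-connected, and one needs local path-connectedness of $M_1$ (inherited from $M_2$ through the covering) to make the path-components of $p^{-1}(U)$ open and the patched homotopy continuous. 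These hypotheses are part of Fox's setting and hold for the manifolds $SO(n)/\Z_2^{n-1}\to SO(n)/G_0$ and $\Simp(D)^+\to\Simp(D)^+/A_{n+1}$ to which the paper applies the theorem, so your argument is complete for the intended use.
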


Hence
$$\cat(W(n)_{>0}/G_0)= \cat(SO(n)/G_0)\geq \cat(Fl(n,\R))=\frac{n(n-1)}2+1.$$
By the dimension argument we have an equality.
\begin{corollary}
Any function satisfying the assumption of Theorem \ref{LSmain} has at least $\frac{n(n-1)}2+1$ bifurcation points.
\end{corollary}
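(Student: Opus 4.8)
The plan is to combine all the pieces the excerpt has already assembled into a single chain of inequalities. The target statement is: any continuous proper function $f$ on $M=W(n)_{>0}/G_0$, bounded from below and with $f^{-1}(0)=W(n)_0/G_0$ (equivalently $f=-\log g$ as in the discussion preceding the statement), has at least $\frac{n(n-1)}2+1$ bifurcation points. First I would invoke Theorem~\ref{LSmain}: for $a$ large enough the sublevel set $M_{\leq a}$ contains the compact core $SO(n)/G_0$ onto which $M$ deformation retracts, so $\cat_M(M_{\leq a})=\cat_M(SO(n)/G_0)=\cat(M)$, and hence $f$ has at least $\cat(M)$ bifurcation points. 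This reduces everything to the purely topological estimate $\cat(W(n)_{>0}/G_0)\geq \frac{n(n-1)}2+1$.

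Next I would establish that lower bound for the category. The key observation already made is the homotopy equivalences $W(n)_{>0}/G_0\simeq SO(n)/G_0$ and $W(n)_{>0}/\Z_2^n$ (or rather $W(n)_{\neq 0}/\Z_2^n$) $\simeq Fl(n,\R)$. Since $G_0$ contains $\Z_2^{n-1}$ (the sign-change subgroup of determinant one) as a subgroup, the projection $SO(n)/\Z_2^{n-1}\to SO(n)/G_0$ is a finite covering map, so by Theorem~\ref{26} we get $\cat(SO(n)/G_0)\geq \cat(SO(n)/\Z_2^{n-1})=\cat(Fl(n,\R))$. Then I would feed in the cup-length computation: by Theorem~\ref{flagiR} the top cohomology class of $Fl(n,\R)$ in degree $\frac{n(n-1)}2$ is a product of degree-one Stiefel--Whitney classes, so $\ell_{\Z_2}(Fl(n,\R))=\frac{n(n-1)}2$, whence $\cat(Fl(n,\R))\geq \frac{n(n-1)}2+1$. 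Chaining these: $\cat(W(n)_{>0}/G_0)\geq \cat(Fl(n,\R))\geq \frac{n(n-1)}2+1$.

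Finally I would note that this is all that is needed: the corollary only asserts the lower bound, so the dimension argument giving equality for $\cat(Fl(n,\R))$ is not strictly required here, though it is worth remarking (as the text does) that $\cat(W(n)_{>0}/G_0)$ is in fact exactly $\frac{n(n-1)}2+1$ since category never exceeds dimension plus one and $\dim(SO(n)/G_0)=\frac{n(n-1)}2$. Combining with the reduction from the first paragraph completes the proof.

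The main obstacle, such as it is, is not in this corollary itself — given the preceding machinery the argument is essentially a bookkeeping assembly — but rather lies upstream in Theorem~\ref{LSmain}, the version of the Lusternik--Schnirelmann theorem for merely continuous proper functions on a locally contractible metric space, whose proof is deferred to the Appendix. One should be a little careful that the function $f=-\log g$ genuinely satisfies the hypotheses of Theorem~\ref{LSmain}: properness and boundedness below follow because $g\to 0$ exactly on the boundary stratum $W(n)_0/G_0$, so $f\to+\infty$ there, and $M$ is an open manifold hence locally contractible and metrizable; these verifications are routine but must be stated. The other mild subtlety is checking that $\Z_2^{n-1}$ as realized by sign-change diagonal matrices of determinant one does sit inside $G_0=\{a\sigma : \det a=\det\sigma\}$ — it does, taking $\sigma=\mathrm{id}$ and $a$ any diagonal $\pm1$ matrix with an even number of $-1$'s — so that the covering $SO(n)/\Z_2^{n-1}\to SO(n)/G_0$ is legitimate.
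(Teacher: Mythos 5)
Your proposal is correct and follows the paper's own argument essentially verbatim: Theorem~\ref{LSmain} applied to $f=-\log g$ with $M_{\leq a}$ containing the compact retract $SO(n)/G_0$, then the covering $SO(n)/\Z_2^{n-1}\to SO(n)/G_0$ via Theorem~\ref{26}, and finally the cup-length bound $\cat(Fl(n,\R))\geq\frac{n(n-1)}2+1$. The extra verifications you flag (properness of $f$, the inclusion $\Z_2^{n-1}\subset G_0$) are sound and consistent with what the paper leaves implicit.
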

By the same method we obtain an estimate for the complex case
\begin{corollary} We have $$\cat(W(n)_{>0}^\C/(S^1)^n\rtimes \Sigma_n)\geq\frac{n(n-1)}2+1.$$
Any function satisfying the assumption of Theorem \ref{LSmain} has at least $\frac{n(n-1)}2+1$ bifurcation points.
\end{corollary}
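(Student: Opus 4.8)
The plan is to mimic the real case step by step, replacing $\Z_2^n$ by the torus $(S^1)^n$ and $SO(n)$ by $U(n)$ throughout. First I would invoke Proposition \ref{opiszesp}, which tells us that $W(n)^\C_{\not=0}/(S^1)^n$ is diffeomorphic to $Fl(n,\C)\times\mathring{\mathbb B}^{n(n-1)}$, so it is homotopy equivalent to the complex flag variety $Fl(n,\C)$. Next I would compute the cup-length of $Fl(n,\C)$ over $\Q$ (or $\Z$): by Theorem \cite[Th. 20.3(b)]{Bor} the cohomology $H^*(Fl(n,\C);\Z)$ is generated by the degree-two Chern classes of the tautological line bundles, and the fundamental class in top degree $n(n-1)$ is a product of such degree-two classes. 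Since each generator has degree two, the number of factors needed to reach the top class is $\tfrac{n(n-1)}2$, hence $\ell_\Q(Fl(n,\C))=\tfrac{n(n-1)}2$ and therefore $\cat(Fl(n,\C))\geq\tfrac{n(n-1)}2+1$.

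Then I would transfer this bound up to the quotient by the residual permutation action. The space of interest is $W(n)^\C_{>0}/((S^1)^n\rtimes\Sigma_n)$, which by Proposition \ref{opiszesp} is homotopy equivalent to $\big(U(n)/(S^1)^n\big)/\Sigma_n = Fl(n,\C)/\Sigma_n$; the projection $Fl(n,\C)\to Fl(n,\C)/\Sigma_n$ is a covering map (the $\Sigma_n$-action on the flag variety is free, as permuting a collection of $n$ pairwise-orthogonal lines cannot fix the collection unless the permutation is trivial — or, more robustly, one argues as in the real case using that the ambient action on matrices is free). Applying Theorem \ref{26} to this covering gives
$$\cat\big(W(n)^\C_{>0}/((S^1)^n\rtimes\Sigma_n)\big)=\cat\big(Fl(n,\C)/\Sigma_n\big)\geq\cat(Fl(n,\C))\geq\tfrac{n(n-1)}2+1,$$
which is the first assertion. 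For the statement about bifurcation points, I would then apply Theorem \ref{LSmain} exactly as in the paragraph preceding the real corollary: take $M=W(n)^\C_{>0}/((S^1)^n\rtimes\Sigma_n)$ and $f=-\log g$ restricted to $W(n)^\C_{>0}$; this $f$ is proper and bounded below, $M$ is path-connected, metric and locally contractible, and $M$ retracts onto the compact $Fl(n,\C)/\Sigma_n$, so for $a$ large enough $M_{\leq a}\supset Fl(n,\C)/\Sigma_n$ and $\cat_M(M_{\leq a})=\cat(M)\geq\tfrac{n(n-1)}2+1$, forcing at least that many bifurcation points.

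The main obstacle I anticipate is the freeness of the $\Sigma_n$-action on $Fl(n,\C)$ (equivalently, that the map in Proposition \ref{opiszesp} descends to a genuine covering). In the real case this was handled cleanly by noting that $G_0$ and $W(n)_{>0}$ both sit inside $GL_n(\R)$ and the action is matrix multiplication, hence free; I would reproduce that argument verbatim over $\C$, observing that $(S^1)^n\rtimes\Sigma_n$ embeds in $GL_n(\C)$ and acts on $W(n)^\C_{\not=0}\subset GL_n(\C)$ by multiplication, so the action is free and the quotient is a manifold, and the further quotient by the free $\Sigma_n$-action on $Fl(n,\C)$ is a covering. Everything else — the cup-length computation and the invocation of Theorems \ref{LSmain} and \ref{26} — is then routine and identical in form to the real case already carried out above.
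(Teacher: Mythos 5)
Your proposal is correct and follows exactly the route the paper intends: the paper's "proof" of the complex corollary is literally the remark that the real argument goes through verbatim with $\Z_2^n$ replaced by $(S^1)^n$, the cup-length of $Fl(n,\C)$ computed from the degree-two Chern class generators, and Theorem \ref{26} applied to the covering by the free $\Sigma_n$-action. Your added care about freeness of the action (via the matrix-multiplication argument inside $GL_n(\C)$, or distinctness of the orthogonal lines) fills in a detail the paper leaves implicit, but the approach is the same.
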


\section{Generic convex bodies}\label{section6}

The second part of Theorem 1 follows directly from the combination of the next three statements.

1) Morse functions are open in $C^2\left(W(n)_{>0}/G_0)\right )$.

2) Banach spaces with semialgebraic unit ball are dense in space parameterizing Banach spaces equipped with the Hausdorff metric. This is a result of Hammer (cf. \cite{Ham} or \cite{Kroo} for quantitative version).

3) In the algebraic variety of convex symmetric surfaces $D$ which are given by the level surfaces of polynomials of $n$ variable of fixed degree, those for which the determinant function $g$ is not Morse form a proper subvariety.

We will give a proof of the last statement, namely
we will show that for generic convex semialgebraic bodies the function $\tilde g$ is a Morse function, hence there exist  at least $\geq 2^{\lfloor\frac n2\rfloor}+4$ Auerbach bases. We will give an argument for the bodies defined by homogeneous polynomial equations.

\begin{example}\rm Let $D$ be the convex body given by the inequality $P_m(x)=\sum_{i=1}^nx_i^{2m}\leq1$. The standard basis $\eps_1,\eps_2,\dots,\eps_n$ satisfies the Auerbach condition. This means that $\eps_1,\eps_2,\dots,\eps_n$ is a critical point of the function $\det:(\partial D)^n\to \R$. We claim that this critical point is  nondegenerate. To show that we compute the Hessian. We choose coordinates in $(\partial D)^n$ indexed by pairs $(i,j)$, $i,j\in\{1,2,\dots,n\}$, $i\ne j$. We set
$$v_i=(x_{i,1},x_{i,2}, \dots ,x_{i,n}), \quad\text{with } x_{i,i}=(1-\sum_{j\ne i}x_{i,j}^{2m})^{1/{2m}}\,. $$
Then for $m>1$
\begin{multline*}\det(v_1,v_2,\dots,v_n)=\det\left(\left(\begin{matrix}
1&x_{1,2}&x_{1,3}&\dots&x_{1,n}\\
x_{2,1}&1&x_{2,3}&\dots&x_{2,n}\\
x_{3,1}&x_{3,2}&1&\dots&x_{3,n}\\
\vdots\\
x_{n,1}&x_{n,2}&x_{n,3}&\dots&1\end{matrix}\right)+{\mathcal  O}(||x||^{2m})\right)=\\=1-\sum_{i\ne j} x_{i,j}x_{j,i}+{\mathcal  O}(||x||^{3})\,.\end{multline*}
This symmetric form is a sum of standard hyperbolic forms, so it is nondegenerate.

Moreover  every  homogeneous polynomial $Q$ of degree $2m$ having a critical point in $(\eps_1,\eps_2,\dots,\eps_n)$ can be deformed to $P_m$  $$Q_t=(1-t)Q+t\sum x_i^{2m}\,.$$ For $t=1$ we have a Morse singularity and the family is analytic. Therefore sufficiently close to $t=0$ the singularity is Morse as well.\end{example}

Let $V_{n,m}$ be the space of  homogenous polynomials in $n$ variables of degree $2m$. Let $U_{n,m}$ be the subset of polynomials $P$, such that $D_P=\{v\in\R^n\;|\:P(v)\leq 1\}$ is strictly convex. The set $U_{n,m}$ is an open set in $V_{n,m}$.
For the convex body defined by $P(v)\leq 1$ the function $g_P^+=\tilde g_{D_P}:W(n)_{\leq 0}\to \R$ (see (\ref{gtilde})) is equal to
$$g_P^+=\tilde g_{D_P}(v_1,v_2,\dots,v_n)=\det(v_1,v_2,\dots,v_n)\big/\prod_{i=1}^n P(v_i)^{\frac1{2m}}\,.$$

We define the critical and degeneration sets:
$${\bf Crit}_{n,m}=\{(v_1,v_2,\dots,v_n,P)\in W(n)_{>0}\times U_{n,m}\;|\;(v_1,v_2,\dots,v_n)\text{ is a critical point of } g_P^+\;\}$$
\begin{multline*}{\bf Degen}_{n,m}=\{(v_1,v_2,\dots,v_n,P)\in W(n)_{>0}\times U_{n,m}\;|\;
(v_1,v_2,\dots,v_n)\text{ is a degenerate} \\\text{critical point of } g_P^+\;\}\,.\end{multline*}
 Defined above sets are given by analytic equations.
For every positively oriented $n$-tuple $\underline v=(v_1,v_2,\dots, v_n)$ of independent vectors the intersection ${\bf Crit}_{n,m}\cap (\{\underline v\}\times U_{n,m})$
 is homeomorphic to the corresponding intersection for the standard basis. That is so because by a linear transformation every independent $n$-tuple can be transformed to the standard basis.
We will compute the codimension of ${\bf Crit}_{n,m}$ which is the same as the codimension of ${\bf Crit}_{n,m}\cap (\{\underline \eps\}\times U_{n,m})$ in $\{\underline \eps\}\times U_{n,m}$. The condition that $\eps_i\in \partial D_P$ i.e. $P(\eps_i)=1$ reduces to the statement that the coefficient of the monomial $x_i^{2m}$ is equal to 1. The condition that we have is a critical point is $\frac\partial{\partial x_j}P(\eps_i)=0$ for $j\ne i$. It is equivalent to vanishing of the coefficient of the monomial $x_jx_i^{2m-1}$.
Therefore the codimension is equal to $n(n-1)$.
As shown by the example the sets of  ${\bf Degen}_{n,m}\cap(\{\underline v\}\times U_{n,m})$ are smaller, they are of codimension $n(n-1)+1$. Therefore $\dim({\bf Degen}_{n,m})=\dim V_{n,m}+\dim W(n)-(n(n-1)+1)<\dim(V_{n,m})$ and the projection ${\bf Degen}_{n,m}\to V_{n,m}$ cannot be surjective. It follows that for each $m>1$ the set
$$\{P\in U_{n,m}\;|\;\tilde g_P\text{ is Morse}\}$$
is open and dense in $U_{n,m}$.
The argument can be modified to obtain the same conclusion for nonhomogeneous polynomials.
\smallskip

The reasoning above is based on the codimension argument. For complex manifolds the codimension is always equal to the number of locally independent equations at a generic point. For real analytic sets the codimension may be bigger, but for the argument it is even better.

\section{Auerbach simplices}

Let $D\subset \R$ be a convex body. We call a simplex inscribed in $D$ (i.e. such that all its vertices lie on the boundary of $D$)  an {\it Auerbach simplex} of $D$ provided  every  vertex admits a supporting hyperplane parallel to the opposite face. The methods developed in the previous sections could be used to estimate the number of different Auerbach simplices of $D$.

Similarly to the case of Auerbach bases, any bifurcation point of the volume function
defined on the product of $n+1$  copies of $\partial D$. Below we briefly describe the homotopy type of the manifold of all such simplices.

\def\Simp{{\bf Simp}}
Let $\Simp$ be the space of all nondegenerate simplices in $\R^n$, i.e. $(n+1)$-tuples of points in $\R^n$ which do not lie on a common affine hyperplane.
Let $\Simp( D)\subset\Simp$, the space of nondegenerate simplices inscribed in $D$, i.e. $b_i\in\partial D$ for $i=0,1,\dots,n$.

\begin{proposition} The space $\Simp( D)$ is homotopy equivalent to $O(n)$\end{proposition}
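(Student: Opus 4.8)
The plan is to construct a deformation retraction of $\Simp(D)$ onto a subspace homeomorphic to $O(n)$, by mimicking the Gram--Schmidt-type argument used earlier for $W(n)$. First I would observe that $\Simp$ itself is homotopy equivalent to $O(n)$: an $(n+1)$-tuple $(b_0,b_1,\dots,b_n)$ of affinely independent points determines an ordered basis $(b_1-b_0,\dots,b_n-b_0)$ of $\R^n$, giving a homeomorphism $\Simp \cong \R^n\times GL_n(\R)$ (the first factor being the position of $b_0$), and $GL_n(\R)$ deformation retracts onto $O(n)$ by Gram--Schmidt. The content of the proposition is that inscribing in $\partial D$ does not change this homotopy type.

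For the inscribed version I would use the ``first vertex plus Gram--Schmidt'' decomposition adapted to $D$. The key geometric fact is the following normalization map: given an affinely independent tuple $(b_0,\dots,b_n)$, one can canonically straighten it. Concretely, I would fix a reference point (say the center $0$ of $D$) and a reference affine frame, and define a map $\Simp(D)\to O(n)$ sending a simplex to the orthogonal frame obtained by applying Gram--Schmidt to $(b_1-b_0,\dots,b_n-b_0)$; as in Proposition~\ref{Wzero} this map is equivariant for the left $O(n)$-action (rotating $\R^n$ carries inscribed simplices of $D$ to inscribed simplices of the rotated body, but since we only want a homotopy equivalence, not an equality of spaces, I instead work with a fixed $D$ and retract). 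The cleaner route: exhibit a fibration $\Simp(D)\to \Simp(D)$ whose fibers — the sets of inscribed simplices with a prescribed ``shape'' up to rotation — are contractible.

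The technically cleanest argument I would actually carry out is the following. Choose, for each unit vector $u\in S^{n-1}$, the point $\rho(u)\,u\in\partial D$ where $\rho$ is the radial function of $D$ (this is the map $h$ of the excerpt, up to inversion). Then any nondegenerate inscribed simplex has its $n+1$ vertices determined by their directions from $0$, but not every tuple of directions gives an inscribed simplex whose vertices are affinely independent and whose ``combinatorial position of $0$'' is controlled; I would instead parametrize $\Simp(D)$ by: (i) an ordered orthonormal frame $F\in O(n)$, and (ii) $n(n+1)$-minus-appropriate real parameters recording, in the coordinates given by $F$, the deviation of the simplex from a fixed model simplex, subject to the open conditions that each vertex lands on $\partial D$ and the simplex stays nondegenerate. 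The set of admissible parameters for fixed $F$ is an open subset of a Euclidean space that I claim is convex, hence contractible — this convexity (or at least contractibility) is the main obstacle, since it requires understanding how the constraint ``vertex $\in\partial D$'' interacts with the linear parameters, and it is exactly here that one uses that $D$ is convex with nonempty interior (star-shapedness with respect to $0$ suffices to solve the radial equation uniquely, but contractibility of the fiber needs a genuine argument, e.g. a straight-line homotopy in the parameter space combined with the implicit solvability of the radial constraint). Once the fiber is shown contractible and the projection $\Simp(D)\to O(n)$ is shown to be a (locally trivial) fibration, the long exact sequence of the fibration, or simply the existence of a section together with fiberwise contractibility, yields $\Simp(D)\simeq O(n)$, completing the proof.
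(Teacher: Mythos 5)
Your strategy hinges on two claims you never establish: that the Gram--Schmidt map $\Simp(D)\to O(n)$ is a locally trivial fibration, and that its fibers are contractible. You flag the latter yourself as ``the main obstacle'' and only gesture at a straight-line homotopy. Unlike the situation of Proposition~\ref{Wzero}, there is no equivariance to fall back on: the left $O(n)$-action on $\R^n$ does not preserve $\Simp(D)$ for a general body $D$ (it carries it to the space of simplices inscribed in the rotated body), so neither local triviality nor a product decomposition can be deduced from symmetry. The fiber over a fixed frame $F$ --- all inscribed nondegenerate simplices whose edge vectors $(b_1-b_0,\dots,b_n-b_0)$ orthonormalize to $F$ --- is cut out by $n+1$ \emph{closed} codimension-one conditions $b_i\in\partial D$ interacting with the triangularity constraints; it is not an open subset of Euclidean space as you describe, and after eliminating the radial constraints there is no evident convexity. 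This is a genuine gap, not a routine verification.

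The paper circumvents exactly this difficulty by thickening rather than fibering over $O(n)$. It introduces $\Simp^{(3)}$, the space of pairs $(\Delta,a)$ with $\Delta$ inscribed and $a\in int(\Delta)$, and the analogous space $\Simp^{(2)}$ of arbitrary nondegenerate simplices with a marked point $a\in int(\Delta)\cap int(D)$. Then $\Simp^{(2)}$ is homeomorphic to $GL_n(\R)\times int(\Delta^n_{st})\times int(D)$ (edge vectors, barycentric coordinates of $a$, and $a$ itself), the map $\Simp^{(2)}\to\Simp^{(3)}$ given by centrally projecting each vertex from $a$ onto $\partial D$ is an explicit trivial bundle with fiber $\R^{n+1}_+$ (the radial scalings of the $n+1$ vertices), and $\Simp^{(3)}\to\Simp(D)$ has contractible fibers $int(\Delta)$. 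All the contractibility is thus produced in explicit product form, with convexity of $D$ entering only through the facts that radial projection from an interior point is a homeomorphism onto $\partial D$ and that $int(\Delta)\subset int(D)$ for inscribed simplices. To salvage your route you would essentially have to reprove these facts fiberwise over $O(n)$, which is harder than the original problem.
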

\begin{proof} Let us consider the following topological spaces
\begin{enumerate}\item
$GL_n(\R)\times int(\Delta^n_{st})\times int(D),$ where
$$\Delta^n_{st}=\{(x_1,x_2,\dots,x_n)\in \R^n_{\geq0}\;:\;\sum_{i=1}^n x_i\leq 1\}\,.$$
is the standard simplex,

\item $$\Simp^{(2)}=\{(\Delta,a)\in \Simp\times int(D)\;:\;a\in int(\Delta)\}\,$$
the set of simplices with a chosen point of $int(D)$ in its interior,

\item $$\Simp^{(3)}
=\{(\Delta,a)\in \Simp(D)\times int(D)\;:\;a\in int(\Delta)\}\,$$
the set of inscribed simplices with a chosen point in its interior.
\end{enumerate}
We  show that the space (1) is homeomorphic to (2).
The homeomorphism (1)$\to$(2)
is the following: let $(v_1,v_2,\dots,v_n)\in GL_n(\R)$ (a matrix consisting of columns $v_i$), $(x_1,x_2,\dots,x_n)\in int(\Delta^n_{st})$, $a\in int(D)$. W define the homeomorphism by the formula:
$$\big((v_1,v_2,\dots,v_n),(x_1,x_2,\dots,x_n),a\big)
\mapsto \big(T_w(\langle a,a+v_1,a+v_2,\dots,a+v_n\rangle),a\big)\,,$$
where $T_w$ is the translation by the vector $w=-\sum x_iv_i$.
The inverse map is the following: given a simplex
 $\Delta=\langle b_0,b_1,\dots b_n\rangle$ and $a\in int(\Delta)\cap int(D)$. The element of $GL_n(\R)$ is defined  by
$$(v_1,v_2,\dots,v_n)=(b_1-b_0,b_2-b_0,\dots,b_n-b_0)\,.$$
The point in the standard simplex $(x_1,x_2,\dots,x_n)$ consists of coordinates of $a-b_0$ in the basis $v_1,v_2,\dots,v_n$.
\medskip

We will show that the  space (3)  is homotopy equivalent to (2).
For $a\in int(D)$ let
$$p_a:\R^n\setminus \{a\}\to \partial D$$ be the central projection from $a$ onto $\partial D$.
Consider the following map
$$pr:\Simp^{(2)}\to \Simp^{(3)}\,,$$
$$pr(\langle b_0,b_1,\dots b_n\rangle,a)=(\langle p_a(b_0),p_a(b_1),\dots p_a(b_n)\rangle,a)\,.$$
The fibers of this map are homeomorphic to $\R^{n+1}_+$. In fact $$\Simp^{(2)}\stackrel{\text{homeo}}\simeq \Simp^{(3)}\times \R^{n+1}_+\,.$$ Therefore it is a homotopy equivalence.
\medskip

Clearly the space $\Simp^{(3)}$ is  homotopy equivalent to $\Simp(D)$: it fibers over $\Simp(D)$ with the fibers homeomorphic to $int(\Delta^n_{st})$.
It follows that $$\Simp(D)\sim GL_n(\R^n)\sim O(n)\,.$$
\end{proof}

The connected component $\Simp(D)^+$  consisting of positively oriented simplices has homotopy type of $SO(n)$.
 The group of permutations $\Sigma_{n+1}$ acts freely on the spaces of simplices. The category of the quotient space $\Simp(D)/\Sigma_{n+1}=\Simp(D)^+/A_{n+1}$ is at least as the category of $SO(n)$ by Theorem \ref{26}.
The cohomology algebra $H^*(SO(n);\Z_2)$ is described e.g. in \cite[Th. 3D2]{Hat}. Its cup-length is $O(n\log n)$.
Precisely, $$\ell_{\Z_2}(SO(n))=\theta(n):=\sum_{i\geq0,\;2i+1<n} (p_{n,i}-1)\,,\quad \text{where }\quad p_{n,i}=\min\big\{2^k\,|\,k\in \Z\,,\;2^k(2i+1)\geq n\big\}$$
Hence
$$\cat(\Simp(D)/\Sigma_{n+1})\geq\theta(n)\,.$$
\medskip

If the body $D$ is general enough, then (as before) to give a lower bound of the number of critical points we use Morse theory. The rational cohomology behaves better when we pass to the quotient space $SO(n)/A_{n+1}$. The dimension does not change since the action of $A_{n+1}$ is trivial on cohomology. (The group $A_{n+1}$ can be identified with the group of orientation preserving isometries of a regular simplex in $\R^n$.) We obtain
$$\text{number of critical points }\geq \dim H^*(SO(n)/A_{n+1};\Q)=\dim H^*(SO(n);\Q)=2^{\lfloor \frac n2\rfloor}\,.$$
The computation of the rational cohomology of $SO(n)$ follows from \cite[Prop. 3D4]{Hat}.

\section{Summary}
Below we list bounds for a number of bifurcation points for the spaces considered above.
For Auerbach bases:
\vskip5mm
\def\wys{\phantom{\begin{matrix}A^A\\A^A\end{matrix}}}
\begin{center}\begin{tabular}{|c|c|c|}
  \hline
 $\wys$ & Morse function  & arbitrary function\\
  \hline
  real & $\geq 2^{\lfloor\frac n2\rfloor}+4\wys$ & $\geq \frac{n(n-1)}2+1\wys$\\
  \hline
  complex & $\geq n!+2\wys$ & $\geq \frac{n(n-1)}2+1\wys$ \\
  \hline
\end{tabular}
\end{center}
\vskip10mm
For Auerbach simplices:
\vskip5mm
\def\wys{\phantom{\begin{matrix}A^A\\A^A\end{matrix}}}
\begin{center}\begin{tabular}{|c|c|c|}
  \hline
 $\wys$ & Morse function  & arbitrary function\\
  \hline
  real & $\geq 2^{\lfloor\frac n2\rfloor}+4\wys$ & $\gtrsim n\log n\wys$\\
  \hline
  \end{tabular}
\end{center}

\vskip15mm

\section{Appendix}

For the reader's convenience we repeat the definition of a bifurcation point, the Lusternik--Schnirelmann category and the theorem which we prove.

\begin{definition}\label{bifur2} We say that $x\in M$ is a topologically regular point of $f$ if there exists a neighbourhood of $x$ which is of a product form $U\simeq S\times (a-\eps,a+\eps) $ and the function $f$ on $U$ coincides with the projection onto the second factor. If $x$ is not topologically regular, then we say that it is a bifurcation point. The corresponding value $f(x)$ is called a bifurcation value.\end{definition}

\begin{remark}\rm In this definition we do not assume that $M$ is a manifold.\end{remark}

There are several variants of category of a space. The following is the most convenient for us:

\begin{definition} Let $X$ be a topological space and $Y\subset X$ its closed subspace.
Then the Lusternik--Schnirelmann category $\cat_X(Y)$ denots the smallest cardinality of
covering of $Y$ by open sets which are contractible in $X$. If $X=Y$ we write $\cat(X)=\cat_X(X)$.
\end{definition}

\begin{theorem}\label{LSmain2} Let $M$ be a path connected metric space which is locally contractible. Let $f:M\to \R$ be a continuous proper function which is bounded from below. For each $a\in \R$ the number of bifurcation points with $f(x)\leq a$ is not smaller than $\cat_M(M_{\leq a})$.\end{theorem}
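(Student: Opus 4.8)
The plan is to adapt the classical Lusternik--Schnirelmann deformation argument to the topological setting, replacing the gradient flow of a smooth function by a deformation constructed from the bifurcation-free structure of the sublevel sets. Fix $a\in\R$ and suppose, aiming at a contradiction, that $f$ has at most $k-1$ bifurcation points with $f(x)\le a$, while $\cat_M(M_{\le a})=k$. The key object to build is, for each noncritical (topologically regular) value $c\le a$, a controlled deformation of $M_{\le c}$ into $M_{\le c-\delta}$ for suitable $\delta>0$: near every regular point the neighbourhood splits as $U\simeq S\times(c-\eps,c+\eps)$ with $f$ the second projection, so locally one can push points in the direction of decreasing $f$; patching these local pushes by a partition of unity (available since $M$ is metric, hence paracompact) yields a global deformation. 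Properness of $f$ and boundedness below guarantee that $f^{-1}([c-\delta,c])$ is compact when it contains no bifurcation point, so finitely many local product charts suffice and a uniform $\delta$ exists.

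The next step is the standard covering/deformation interplay. Let $b_1<b_2<\dots<b_r$ ($r\le k-1$) be the bifurcation values in $(-\infty,a]$. Using the deformation above one shows that $M_{\le a}$ deformation-retracts (within $M$) past each regular interval, so that $\cat_M(M_{\le a})$ is governed by how the homotopy type changes across the finitely many bifurcation levels. Around each bifurcation value $b_j$ one argues that $M_{\le b_j+\eps}$ is, up to deformation in $M$, obtained from $M_{\le b_j-\eps}$ by attaching a neighbourhood of the (compact, by properness) set of bifurcation points at level $b_j$; since the set of such points has at most $k-1$ elements in total and each point has a contractible-in-$M$ neighbourhood (here local contractibility of $M$ is used), one can cover the ``new part'' at each level by one open set contractible in $M$. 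Combining with the fact that the ``old part'' $M_{\le b_1-\eps}$ is deformable in $M$ to a point (it lies below the lowest bifurcation value, so it is a product $S\times(\inf f,\,b_1-\eps)$ and hence contractible in $M$ onto a single slice, which is then contractible since... ) --- more carefully, one runs the covering count inductively: $\cat_M(M_{\le b_j+\eps})\le \cat_M(M_{\le b_{j-1}+\eps})+(\text{number of bifurcation points at level }b_j)$. This is the subadditivity estimate that is the technical heart of the argument.

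Telescoping this inequality from the bottom gives $\cat_M(M_{\le a})\le \#\{\text{bifurcation points with }f\le a\}\le k-1$, contradicting $\cat_M(M_{\le a})=k$. Hence the number of bifurcation points with $f(x)\le a$ is at least $\cat_M(M_{\le a})$, which is the assertion.

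The main obstacle I anticipate is making the deformation argument rigorous without smoothness: in the smooth case one integrates a gradient-like vector field, but here one only has local product charts around regular points and must glue local ``downhill'' homotopies into a single continuous (not necessarily $C^0$-small in any derivative sense, just continuous) deformation, while keeping track that it stays inside the ambient $M$ and respects sublevel sets. Properness is what rescues compactness at each stage; local contractibility is what lets the neighbourhoods of the finitely many bifurcation points be taken contractible in $M$; metrizability gives the partition of unity. The bookkeeping of how much the category can jump at a single bifurcation level --- in particular ensuring a whole level's worth of bifurcation points (which could a priori be a finite set of more than one point, or even be handled one at a time) contributes at least that many to the lower bound --- is the delicate combinatorial-topological point, and I would treat it by reducing, via the compactness of each $f^{-1}(b_j)\cap\{f\le a\}$, to finitely many product charts and arguing one bifurcation point at a time.
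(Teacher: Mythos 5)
Your strategy is the classical ``subadditivity across critical levels'' argument, which is genuinely different from the paper's route: the paper works with the min--max values $\lambda_i=\inf\{\sup_X f \;:\; X\subset M \text{ compact},\ \cat_M(X)\geq i\}$, proves that each $\lambda_i$ is a bifurcation value (Lemma \ref{lem1}) and that the $\lambda_i$ are pairwise distinct when the bifurcation set is discrete (Lemma \ref{lem2}). A real advantage of that route is that it never needs to deform a whole sublevel set across a regular interval; it only needs \emph{one} self-homeomorphism of $M$ with $f\circ\Phi\leq f$ that strictly lowers $f$ on a given compact set, which is much cheaper to construct.

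As written, your proposal has two concrete gaps. First, the gluing mechanism at its technical heart does not work: you cannot patch the local downhill pushes by a partition of unity, because a bare metric space has no linear structure in which to average them. The push in a chart $U\simeq S\times(c-\eps,c+\eps)$ moves a point along that chart's second factor, and two overlapping charts give incompatible product decompositions, so ``decrease the $t$-coordinate by $\rho(x)$'' is not a chart-independent instruction. The correct fix --- and what the paper does --- is to take finitely many pushes $\phi_{x_1},\dots,\phi_{x_m}$, each a homeomorphism of $M$ supported in a single chart (of the form $(s,t)\mapsto(s,t-\alpha(s)\beta(t))$) satisfying $f\circ\phi\leq f$, and \emph{compose} them; properness gives the finite subcover of $f^{-1}(c)$, and monotonicity of $f$ under each factor replaces the averaging. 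Second, the base of your telescoping is wrong as stated: $M_{\leq b_1-\eps}$ is not globally a product, and you leave its contribution to the category unresolved. The repair is that it is empty: since $f$ is proper and bounded below it attains its minimum, and a minimum point admits no neighbourhood $S\times(a-\eps,a+\eps)$ on which $f$ is the projection onto an open interval, so the minimum is a bifurcation point, $b_1=\min f$, and $\cat_M(M_{\leq b_1-\eps})=0$. With these two repairs (plus the routine verification that deforming $A$ into $B$ within $M$ gives $\cat_M(A)\leq\cat_M(B)$), your telescoped inequality does deliver the theorem.
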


\begin{remark}\rm It is enough to assume that each point has a neighbourhood which is contractible in the whole $M$.\end{remark}

The proof is based on the following construction.  For $i>0$ let
\begin{equation}\label{minmax}\lambda_i=\inf\big\{\sup\{f(x)\;|\;x\in X\}\;|\;X\subset M,\; \cat_M(X)\geq i\big\}\,,\end{equation}
where the infimum is taken over the compact subsets of $M$. Note that the above infimum is always attained for some set $X\subset M$. It is so because the space of all closed subsets of a compact metric space endowed with the Hausdorff distance is compact (cf. \cite[VI.28]{Hau}, \cite{Pri}) and the function
$X\mapsto \cat_M(X)$ is upper semi-continuous on it (any open covering of the limiting set covers sufficiently close sets too).

Clearly  if \begin{equation}\label{szacowaniecat}X\subset M_{<\lambda_i}\quad \text{then} \quad \cat_M(X)<i\,.\end{equation}

\begin{remark}\rm If $\lambda_{i+1}>\lambda_i$ then the infimum of (\ref{minmax}) is realized by $M_{\leq\lambda_i}$. That is so because if $X$ realizes infimum, then $X\subset M_{\leq\lambda_i}$, hence $\cat_M(M_{\leq\lambda_i})\geq \cat_M(X)=i$.  On the other hand $\cat_M(M_{\leq\lambda_i})<i+1$ by (\ref{szacowaniecat}).\end{remark}

\begin{lemma}\label{lem1} Each $\lambda_i$ is a bifurcation value.\end{lemma}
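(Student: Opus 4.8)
The plan is to argue by contradiction: suppose $\lambda_i$ is not a bifurcation value. Then every point $x$ with $f(x)=\lambda_i$ is topologically regular, so it has a neighbourhood $U_x\simeq S_x\times(\lambda_i-\eps_x,\lambda_i+\eps_x)$ on which $f$ is the projection onto the interval. The point of this is purely local: near the level set $f^{-1}(\lambda_i)$ one can push the level down a little. Concretely, I would first establish a deformation statement: there exist $\eps>0$ and a homotopy $H:M\times[0,1]\to M$ with $H_0=\mathrm{id}$, $H_t$ fixing $M_{\leq\lambda_i-\eps}$, and $H_1(M_{\leq\lambda_i+\eps})\subset M_{\leq\lambda_i-\eps}$; moreover $H_t$ should be homotopic in $M$ to the identity so that it does not decrease $\cat_M$.

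The construction of $H$ is the technical heart and the step I expect to be the main obstacle, because $M$ is only assumed to be a metric space (not a manifold) and $f$ only continuous (not smooth), so there is no gradient flow available. The idea is to cover the compact set $f^{-1}(\lambda_i)$ — compact because $f$ is proper — by finitely many product charts $U_{x_1},\dots,U_{x_k}$, choose a uniform $\eps$ smaller than all the $\eps_{x_j}$ and such that $f^{-1}([\lambda_i-\eps,\lambda_i+\eps])$ is contained in $\bigcup_j U_{x_j}$ (possible again by properness and compactness: if no such $\eps$ existed we could extract a sequence in the complement converging to a point of $f^{-1}(\lambda_i)$), take a partition of unity subordinate to the cover, and on each chart use the product structure to define a downward flow in the interval coordinate, then patch these flows together. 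A standard way to patch is to flow successively through the charts $U_{x_1},U_{x_2},\dots,U_{x_k}$ using the partition-of-unity functions to interpolate between "flow in chart $j$" and "do nothing"; each partial flow lowers $f$ weakly and strictly lowers it on the part of the level set contained in the corresponding chart, so after passing through all $k$ charts the whole slab $f^{-1}([\lambda_i-\eps,\lambda_i+\eps])$ has been pushed below $\lambda_i-\eps$. Local contractibility of $M$ is what guarantees the product charts can be taken small enough that each partial flow is homotopic to the identity through maps of $M$ to itself, hence does not raise $\cat_M$ of any set.

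Granting the deformation $H$, the contradiction is immediate. By the remark following (\ref{minmax}), or directly, choose a compact set $X\subset M_{\leq\lambda_i+\eps}$ with $\cat_M(X)\geq i$ (take $X$ realizing the infimum for $\lambda_{i}$, or a slightly enlarged set; its category is $\geq i$). Then $H_1(X)\subset M_{\leq\lambda_i-\eps}$ is again compact, and since $H_1\simeq\mathrm{id}_M$ in $M$ we have $\cat_M(H_1(X))\geq\cat_M(X)\geq i$ (an open set contractible in $M$ stays contractible in $M$ after composing the covering with $H_1$; equivalently $\cat_M$ is a homotopy-functor in the target). But $H_1(X)\subset M_{<\lambda_i}$ forces $\cat_M(H_1(X))<i$ by (\ref{szacowaniecat}). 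This contradiction shows $\lambda_i$ must be a bifurcation value, proving Lemma \ref{lem1}.

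Two small points I would be careful about: first, that $\lambda_i$ is finite — this uses that $f$ is bounded below (so $\lambda_i>-\infty$) and that $M$ has compact subsets of arbitrarily large category up to $\cat(M)$, together with the fact that any compact set has finite $f$-supremum (so $\lambda_i<\infty$ for $i\leq\cat(M)$; for larger $i$ the infimum is over an empty family and the statement is vacuous); second, that the infimum in (\ref{minmax}) is attained, which the paper already records via compactness of the hyperspace of closed subsets and upper semicontinuity of $X\mapsto\cat_M(X)$, and which lets me actually pick the set $X$ above rather than an approximating sequence.
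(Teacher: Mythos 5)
Your proposal is correct and takes essentially the same route as the paper: cover the compact level set $f^{-1}(\lambda_i)$ by product charts, push $f$ down by chart-supported, bump-function-damped maps composed over a finite subcover, and contradict the minimality of a compact set realizing the infimum in (\ref{minmax}) via (\ref{szacowaniecat}). The only cosmetic difference is that the paper composes homeomorphisms of $M$ (so preservation of $\cat_M$ is immediate), whereas you package the pushes as a homotopy from the identity and invoke homotopy-invariance of $\cat_M$, which works equally well.
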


To show that we need to construct an isotopy which is an analogue of a gradient flow.

\begin{proof} Let $x\in M$ be a topologically regular point with value $f(x)=a$.
We claim that there exist a homeomorphism $\phi:M\to M$, such that
$f(\phi(y))\leq f(y)$ for all $y\in M$,  and $f(\phi(x))<f(x)$.
According to the Definition \ref{bifur2} there exists a neighbourhood of product form $U\simeq S\times (a-\eps,a+\eps)$ of $x=(s_0,a)$.
The homeomorphism  $\phi$ will be the identity outside $U$.
Inside  $U$ we define it as follows. Let  $\alpha: S\to \R$ be a nonnegative continuous function, bounded by $\tfrac \eps 2$ with compact support. Similarly, let $\beta:(a-\eps,a+\eps)\to \R$ be a nonnegative smooth function, bounded by $\tfrac \eps 2$, $|\alpha'|<1$, with support contained in $[a-\tfrac\eps 2,a+\tfrac\eps 2]$.
We assume $\alpha(s_0)>0$ and $\beta(a)>0$.
Then with the identification $U\simeq S\times (a-\eps,a+\eps)$ we put
$$\phi(x)=\phi(s,t)=(s,t-\alpha(s)\beta(t))\,.$$

Suppose that $a$ is not a bifurcation value. For each
point $x\in f^{-1}(a)$ we chose a homeomorphism $\phi_{x}$ described above. Let $$V_x=\{y\in f^{-1}(a)\;:\;f(\phi_{x}(y))<f(y)\}\,.$$
This is an open cover of $f^{-1}(a)$. The set $f^{-1}(a)$ is compact, so we can choose a finite subcover indexed by $x_1,x_2,\dots,x_m$. The homeomorphism $\Phi=\phi_{x_1}\circ\phi_{x_2}\circ\dots \phi_{x_m}$ has the property, that
$$f(\Phi(x))<a\quad \text{for}\quad x\in M_{\leq(a)}\,.$$

 Suppose $X\subset M$ is the subset realizing the infimum of (\ref{minmax}) with $\sup_X(f(x))=\lambda_i$.  Then if $\Phi$ is the above homeomorphism constructed for $a=\lambda_i$, $\Phi(X)$ has the same category in $M$. On the other hand $\sup_{\Phi(X)}(f(x))<\lambda_i$.
This contradicts minimality of $X$.
\end{proof}

\begin{lemma} \label{lem2} Suppose the bifurcation set is discrete. Then $$\lambda_i<\lambda_{i+1}$$ for $i=1,2, \dots, \cat(M)-1$.\end{lemma}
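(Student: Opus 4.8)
Plan of proof. The plan is to argue by contradiction. Suppose $\lambda_i=\lambda_{i+1}=a$ for some $i$ with $1\le i\le\cat(M)-1$ (so that $\lambda_i$ and $\lambda_{i+1}$ are finite). I would then exhibit a single compact set whose category in $M$ is at the same time $\ge i+1$ and $\le i$. The first step is to understand the bifurcation points on the level $a$. Since $f$ is proper, $f^{-1}(a)$ is compact, and since the bifurcation set is discrete, $K_a:=f^{-1}(a)\cap\{\text{bifurcation points}\}$ is finite, say $K_a=\{p_1,\dots,p_k\}$ (we allow $k=0$). Because $M$ is locally contractible and path connected — in the situations of interest it is even a manifold — the finite set $K_a$ is contained in an open set $\mathcal O\subset M$ which is contractible in $M$; one produces such an $\mathcal O$ by joining $p_1,\dots,p_k$ by arcs arranged as a tree and thickening that tree slightly. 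Fix a precompact open $N$ with $K_a\subset N\subset\overline N\subset\mathcal O$; then $\cat_M(Y)\le 1$ for every $Y\subset\overline N$.

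Next I would construct a deformation, exactly as in the proof of Lemma \ref{lem1}. Every point of the compact set $f^{-1}(a)\setminus N$ is topologically regular, hence carries a product neighbourhood on which a push--down self--homeomorphism is defined; covering $f^{-1}(a)\setminus N$ by finitely many of these and composing, one obtains a homeomorphism $\Phi\colon M\to M$ with $f(\Phi(y))\le f(y)$ for all $y$ and $f(\Phi(y))<a$ for all $y\in f^{-1}(a)\setminus N$. It follows that $\Phi(Z)\subset M_{<a}$ for every compact $Z\subset M_{\le a}\setminus N$: a point $z\in Z$ with $f(z)<a$ satisfies $f(\Phi(z))\le f(z)<a$, while a point with $f(z)=a$ lies in $f^{-1}(a)\setminus N$ and is pushed strictly below $a$.

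Finally I would combine the two ingredients. By the definition of $\lambda_{i+1}=a$ together with the fact (recorded after (\ref{minmax})) that the infimum is attained, choose a compact $X\subset M$ with $\cat_M(X)\ge i+1$ and $\sup_X f=a$, so $X\subset M_{\le a}$. Split $X=(X\cap\overline N)\cup(X\setminus N)$. The first piece lies in $\mathcal O$, so $\cat_M(X\cap\overline N)\le 1$. The second piece is compact and contained in $M_{\le a}\setminus N$, so $\Phi(X\setminus N)$ is a compact subset of $M_{<a}=M_{<\lambda_i}$; since $\Phi$ is a self--homeomorphism of $M$ it preserves $\cat_M$, and (\ref{szacowaniecat}) applied with index $i$ gives $\cat_M(X\setminus N)=\cat_M(\Phi(X\setminus N))<i$. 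By subadditivity of the category,
$$i+1\le\cat_M(X)\le\cat_M(X\cap\overline N)+\cat_M(X\setminus N)\le 1+(i-1)=i,$$
which is absurd. Hence $\lambda_i<\lambda_{i+1}$.

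The hard part is the construction and the correct use of $\Phi$: it must push the relevant part of $M_{\le a}$ strictly below the common value $a$ while disturbing only a small neighbourhood of the finite set $K_a$, and this is the only place where the discreteness hypothesis enters. The reason the numerology works is that a finite subset of $M$ sits inside a single set contractible in $M$, so the exceptional neighbourhood costs only $1$ in category, whereas the equality $\lambda_i=\lambda_{i+1}$ produces a gap of $2$ between ``category $<i$ below the level $a$'' (because $a=\lambda_i$) and ``category $\ge i+1$ on $M_{\le a}$'' (because $a=\lambda_{i+1}$). The remaining points — attainment of the infimum in (\ref{minmax}), invariance of $\cat_M$ under homeomorphisms, finiteness of $\lambda_{i+1}$ in the stated range, closedness of $X\cap\overline N$ and $X\setminus N$ — are routine and I would not dwell on them.
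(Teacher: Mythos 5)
Your argument is correct and is essentially the paper's own proof: the same push-down homeomorphism $\Phi$ from Lemma \ref{lem1}, the same splitting of the minimizing set $X$ into a piece near the finitely many bifurcation points on the level $\lambda_i$ (costing $1$ in category) and a piece pushed into $M_{<\lambda_i}$ (costing $<i$ by (\ref{szacowaniecat})), and the same subadditivity contradiction. The only cosmetic difference is that you build one connected contractible neighbourhood by thickening a tree of arcs, whereas the paper takes disjoint contractible neighbourhoods of the $p_j$ and notes that their union is still contractible in $M$ by path-connectedness — a formulation that avoids any appeal to manifold-like structure and works under the theorem's purely topological hypotheses.
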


\begin{proof} Suppose $\lambda_i=\lambda_{i+1}$ and $X\subset M_{\leq \lambda_i}$ realizes the value $\lambda_{i+1}$ in (\ref{minmax}). Let  $x_1,x_2,\dots,x_m$ be the bifurcation points lying on $f^{-1}(\lambda_i)$. Let $V_1,V_2,\dots,V_m$ be contractible neighbourhoods of $x_1,x_2,\dots,x_m$. We can assume that these sets are disjoint. Let $V=\bigcup_{k=1}^mV_k$.  By the method of the previous Lemma we construct a homeomorphism $\Phi$ of $M$ such that $f(\Phi(x))<f(x)$ for $x\in f^{-1}(\lambda_i)\setminus V$. We have
$$\cat_M(X\setminus V)=\cat_M(\Phi(X\setminus V))<i$$
by (\ref{szacowaniecat}).
This means that $X\setminus V$ can be covered by $i-1$ contractible sets. The set $V$ is contractible in $M$ since we assume that $M$ is path connected. Hence $\cat_M(X)\leq i$ which is contradiction.
\end{proof}

\noindent{\it Proof of Theorem \ref{LSmain2}.}
If the bifurcation set is not discrete, then it is infinite and we are done. Suppose it is discrete. For $b$ sufficiently negative the set $M_{\leq b}$ is empty, thus it has category equal to zero. Each value $\lambda_i$ for $i=1,2,\dots \cat(M_{\leq a})$ appears in the segment $[b,a]$. By Lemma \ref{lem1} for each $i$ there is a bifurcation point at the level set $f^{-1}(\lambda_i)$. By Lemma \ref{lem2} there are $\cat_M(M_{\leq a})$ distinct values of $\lambda_i$.\qed

\begin{remark} \rm Without the assumption that the bifurcation set is discrete one can show as \cite[19.12]{DFN} that if $\lambda_i=\lambda_{i+p}$, then the category of the bifurcation set is greater than $p$.\end{remark}.

\end{document}